\documentclass[final,3p,times]{elsarticle}

\usepackage{amsmath,amssymb,amsfonts,mathptmx,dsfont}
\usepackage{graphicx,graphics,subfigure,color}
\newtheorem{theorem}{Theorem}[section]
\newtheorem{lemma}{Lemma}[section]

\newdefinition{definition}{Definition}[section]
\newproof{proof}{Proof}
\newdefinition{assumption}{Assumption}[section]
\numberwithin{equation}{section}
\def\d {{\rm d}}

\begin{document}

\begin{frontmatter}



\title{Traveling waves for SIR model on two-dimensional lattice}

\author[HLJU1,HLJU2]{Ran Zhang}
\ead{ranzhang@hlju.com}

\author[HLJU1,HLJU2]{Shunchang Su}
\ead{2241491@s.hlju.edu.cn}

\author[HLJU1,HLJU2]{Xue Ren\corref{cor1}}
\ead{xueren@hlju.edu.cn}
\cortext[cor1]{Corresponding author.}

\affiliation[HLJU1]{organization={School of Mathematical Sciences, Heilongjiang University}, city={Harbin}, postcode={150080}, country={China}}

\affiliation[HLJU2]{organization={Heilongjiang Provincial Key Laboratory of the Theory and Computation of Complex Systems}, city={Harbin}, postcode={150080}, country={China}}



\begin{abstract}
In this study, we investigate the existence of traveling wave solutions for a SIR model on two-dimensional lattice. The existence of traveling waves is established within the framework of upper and lower solutions and the Schauder fixed-point theorem. Moreover, we construct a Lyapunov functional to analyze the asymptotic behavior of the traveling wave solutions. This is a challenging task due to the two-dimensional lattice structure.
\end{abstract}

\begin{keyword}
vector-borne disease \sep traveling wave solutions \sep Lyapunov functional

\MSC[2020] 92D30 \sep 34D23 \sep 34K20

\end{keyword}

\end{frontmatter}



\section{Introduction}
\def\d {{\rm d}}
Traditionally, ordinary differential equation (ODE) models provide a clear and effective framework for analyzing the dynamics of infectious diseases. ODE models determine whether a disease dies out or persists by analyzing equilibrium states, but they do not describe how infection spreads in space. In contrast, reaction-diffusion models with traveling wave solutions characterize the spatial invasion process, including whether a localized outbreak can propagate into new regions and at what speed \cite{DiekmannJMB1978}. A traveling waves also representing the advancing front of geographic spread by connecting the disease-free and endemic steady states. Thus, the study of traveling wave solutions in reaction-diffusion equations for infectious disease models offers significant advantages in understanding the spread and control of diseases. 

In diffusive infectious disease modelling, local diffusion and nonlocal diffusion describe how a disease spreads across space continually, but they differ in the range of interactions they considered. Local diffusion assumes that the spread is continuous and restricted to neighboring regions, with the disease propagating from one area to the next based on the proximity of infected individuals. In contrast, nonlocal diffusion extends this idea by allowing interactions and disease spread to occur over larger distances. It is particularly useful for modeling long-range interactions, such as vector-borne diseases, where the disease can spread across vast distances due to environmental factors or movement of vectors. For infectious disease models with local and nonlocal diffusion, traveling wave solutions have been extensively studied (see \cite{ZhaoWangRuanJMB2018,WangZhaoWangZhangJDDE2023,ZhaoLiuNARWA2026} for local diffusive epidemic models, \cite{LiLiYangAMC2014,LiLinMaYangDCDSB2014,HeZhangMMAS2025} for nonlocal diffusive epidemic models and the references therein). 

Building on these continuous models, discrete diffusion can be introduced through a lattice system, where individuals or pathogens are confined to a grid, and the disease spreads in discrete steps between distinct spatial points. This approach is advantageous in representing spatially structured environments, such as cities or networks, where interactions occur in specific, discrete locations. The primary benefit of discrete diffusion is its ability to model more realistic, structured scenarios where disease spread follows distinct pathways, such as through transportation systems or across regions with different infection levels, offering a finer-grained representation of disease dynamics. Studying traveling wave solutions in discrete diffusion infectious disease models helps reveal how diseases propagate across spatially structured or patchy environments. It also allows one to characterize the invasion speed and threshold conditions for spatial spread in networks or lattice-like habitats.

In 2016, Fu et al. \cite{FuGuoWuJNCA2016} proposed an diffusive SIR model on one-dimensional (1-D) lattice as follows:
\begin{equation}
\label{PreModel2}
\left\{
\begin{array}{l}
\vspace{2mm}
\displaystyle  \frac{\textrm{d} S_n(t)}{\textrm{d} t} = [S_{n+1} + S_{n-1} - 2S_n] - \beta S_nI_n, \\
\displaystyle  \frac{\textrm{d} I_n(t)}{\textrm{d} t} = d[I_{n+1} + I_{n-1} - 2I_n] + \beta S_nI_n - \gamma I_n, \\
\end{array}\right.
\end{equation}
where \(n \in \mathds{Z}\). The state variables \(S_n\) and \(I_n\) represent the population densities of susceptible and infected individuals at the grid point \(n\) at time \(t\), and \(d\) denotes the diffusion rate of the infected population. In \cite{FuGuoWuJNCA2016}, the authors used nonlinear analysis techniques to establish the existence of traveling wave solutions for model (\ref{PreModel2}) when the basic reproduction number exceeds 1. Subsequently, Wu \cite{WuJDE2017} investigated the existence of critical traveling wave solutions for model (\ref{PreModel2}). By incorporating population dynamics such as birth and death rates, Chen et al. \cite{ChenGuoHamelNon2017} extended model (\ref{PreModel2}) and obtained the existence of semi-traveling wave solutions. Furthermore, Zhang et al. \cite{ZhangWangLiuJNS2021} resolved the strong traveling wave solutions for the model proposed in \cite{ChenGuoHamelNon2017} by using Lyapunov functional techniques.

The discrete diffusive infectious disease models discussed above are all based on a 1-D lattice. While these models provide valuable insights into the spread of diseases in structured populations, they are inherently limited in capturing the complexities of real-world environments. In particular, a 1-D lattice only allows interactions along a single spatial axis, making it difficult to model more complex interactions that naturally occur in two-dimensional (2-D) spaces, such as the spread of disease in urban areas or across larger geographic regions. In \cite{VleckParetCahnSIAP1998,GuoWuOJM2008}, a infinite system with different nonlinear terms on the two-dimensional integer lattice were studied, such model takes the following form
\begin{equation}\label{PreModel1}
\frac{\textrm{d} u_{i,j}(t)}{\textrm{d} t} = \alpha \left[u_{i+1,j}(t)+u_{i-1,j}(t)+u_{i,j+1}(t)+u_{i,j-1}(t)-4u_{i,j}(t)\right] - f(u_{i,j}(t)),\ \ i,j\in \mathds{Z}.
\end{equation}
In \cite{VleckParetCahnSIAP1998}, the authors explored model \eqref{PreModel1} with bistable nonlinearity and addresses propagation failure in the 2-D lattice. Meanwhile, the authors in \cite{GuoWuOJM2008} discussed model \eqref{PreModel1} with monostable nonlinearity, and demonstrating the existence and uniqueness of traveling wave solutions, showing that waves exist above a minimal speed and that the wave profiles are strictly monotone. Thereafter, an increasing number of scholars are focusing on the traveling wave solutions of such equations with different factors, see \cite{ChengLiWangIMAJAM2008,ChengLiWangDCDSB2010,YuZhangWangMCM2013,XuJDE2020} and the references therein. 


Biologically, using 2-D lattice in biological models offers clear advantages by realistically representing spatial interactions, where individuals or pathogens can move and spread in multiple directions, reflecting real-life scenarios with greater freedom of movement across both horizontal and vertical axes. Motivated by these advantages, we focus on studying the SIR model on 2-D lattice:
\begin{equation}
\label{Model}\left\{
\begin{array}{l}
\vspace{2mm}
\displaystyle   \frac{\textrm{d} S_{i,j}(t)}{\textrm{d} t} = d_1\left[S_{i+1,j}(t)+S_{i-1,j}(t)+S_{i,j+1}(t)+S_{i,j-1}(t)-4S_{i,j}(t)\right] + \Lambda - \frac{\beta S_{i,j}(t)I_{i,j}(t)}{1+\alpha I_{i,j}(t)} - \mu_1 S_{i,j}(t),\\
\vspace{2mm}
\displaystyle   \frac{\textrm{d} I_{i,j}(t)}{\textrm{d} t} = d_2\left[I_{i+1,j}(t)+I_{i-1,j}(t)+I_{i,j+1}(t)+I_{i,j-1}(t)-4I_{i,j}(t)\right] + \frac{\beta S_{i,j}(t)I_{i,j}(t)}{1+\alpha I_{i,j}(t)} - \mu_2 I_{i,j}(t),\\
\displaystyle   \frac{\textrm{d} R_{i,j}(t)}{\textrm{d} t} = d_3\left[R_{i+1,j}(t)+R_{i-1,j}(t)+R_{i,j+1}(t)+R_{i,j-1}(t)-4R_{i,j}(t)\right] + \gamma I_{i,j}(t)-\mu_1R_{i,j}(t)  .
\end{array}\right.
\end{equation}
where $S_{i,j}(t)$, $I_{i,j}(t)$ and $R_{i,j}(t)$ represents spatial densities of susceptible, infective and removed individuals at each spatial grid point \((i, j)\) and time $t \geq 0$, The biological significance of the parameters in the model is as follows:

\begin{itemize}
    \item \(d_1, d_2, d_3\): Diffusion coefficients for susceptible, infected, and recovered individuals, respectively. These represent the rate at which individuals move or disperse across neighboring spatial locations (e.g., due to migration or contact).
    \item \(\Lambda\): The recruitment rate of susceptible individuals, representing the inflow of new individuals into the susceptible class (e.g., births or immigration).
    \item \(\beta\): The transmission rate coefficient, which determines the probability of disease transmission upon contact between a susceptible and an infected individual.
    \item \(\alpha\): A nonlinearity factor in the transmission term, accounting for the saturation effect on disease transmission at high infection levels. This reflects the fact that as the number of infected individuals increases, the per-capita transmission rate decreases.
    \item \(\mu_1, \mu_2\): Mortality rates for susceptible and infected individuals, respectively. These represent the natural death rates of individuals in these compartments.
    \item \(\gamma\): The recovery rate, which indicates the rate at which infected individuals recover and transition into the recovered class.
\end{itemize}

Our model \eqref{Model} captures the spread of infectious diseases in a 2-D spatial domain, where individuals move and interact in both directions. By extending the traditional 1-D epidemic model \cite{ChenGuoHamelNon2017,ZhangWangLiuJNS2021} to 2-D case, we aim to explore the effect of disease spread across a broader range of interactions. This 2-D lattice SIR model \eqref{Model} offers a richer framework to study the dynamics of infectious diseases in real-world scenarios, providing new insights into the effectiveness of intervention strategies in more complex, spatially structured environments.

\section{Preliminaries }

In this section, owing to the lack of comparison principle property for the wave profile of system (\ref{Model}), we shall establish the existence of traveling wave solutions through the construction of ordered upper-lower solutions combined with Schauder's fixed point theorem. We investigate traveling wave solutions defined as bounded continuous profiles propagating along the spatial-temporal domain with constant wave speed~$c > 0$. Specifically, these solutions admit a functional representation in terms of the composite variable $i \cos\theta + j \sin \theta + ct$, satisfying the following conditions
\begin{equation}\label{dy2.1}
S_{i,j}(t)=S(i \cos\theta + j \sin \theta + ct)~~\mathrm{and}~~I_{i,j}(t)=I(i \cos\theta + j \sin \theta + ct).
\end{equation}
Next, assuming $\xi = i \cos\theta + j \sin \theta + ct$, we can rewrite system (\ref{Model}) as the following traveling wave system
\begin{equation}
\label{WaveEqu}\left\{
\begin{array}{l}
\vspace{2mm}
\displaystyle   c S'(\xi) =
d_1\mathfrak{J}[S](\xi) + \Lambda - \frac{\beta S(\xi)I(\xi)}{1+\alpha I(\xi)} - \mu_1 S(\xi),\\
\displaystyle   c I'(\xi) =
d_2\mathfrak{J}[I](\xi) + \frac{\beta S(\xi)I(\xi)}{1+\alpha I(\xi)} - \mu_2 I(\xi),
\end{array}\right.
\end{equation}
for all \(\xi\in\mathbb{R}\), where
\[
\mathfrak{J}[\phi](\xi) : = \phi(\xi + \sin\theta) + \phi(\xi - \sin\theta) + \phi(\xi + \cos\theta) + \phi(\xi - \cos\theta) - 4\phi(\xi).
\]
Then, the traveling wave solution we want to find has the following asymptotic boundary conditions
\begin{equation}\label{bj1}
\underset{\xi\rightarrow -\infty}{\lim}(S(\xi), I(\xi))=(S_{0},0),
\end{equation}
and
\begin{equation}\label{bj2}
\underset{\xi\rightarrow +\infty}{\lim}(S(\xi), I(\xi))=(S^{\ast},I^{\ast}).
\end{equation}
Among them, $(S_{0},0)$ is the disease-free equilibrium and $(S^{\ast},I^{\ast})$ is the endemic equilibrium, it follows from \cite{KorobeinikovBMB2006} that there exists only one positive $(S^{\ast},I^{\ast})$ if $\Re_0: = \frac{\beta S_0}{\mu_2} > 1$, and $\Re_0$ is the
basic reproduction number. Linearizing the second equation of the system (\ref{WaveEqu}) at the disease-free equilibrium, we can obtain the following equation
\begin{equation}\label{LinEqu}
c I'(\xi) =
d_2\mathfrak{J}[I](\xi) + \beta S_0I(\xi) - \mu_2 I(\xi).
\end{equation}
Let $I(\xi) = \mathrm{e}^{\lambda \xi}$, one has
\[
d_2 \left[\mathrm{e}^{\lambda \sin\theta} + \mathrm{e}^{-\lambda \sin\theta} + \mathrm{e}^{\lambda \cos\theta} + \mathrm{e}^{-\lambda \cos\theta}-4\right] - c\lambda + \beta S_0 - \mu_2 = 0.
\]
Denote
\begin{equation}\label{CharEqu}
\Delta_c(\lambda) : =
d_2 \left[\mathrm{e}^{\lambda \sin\theta} + \mathrm{e}^{-\lambda \sin\theta} + \mathrm{e}^{\lambda \cos\theta} + \mathrm{e}^{-\lambda \cos\theta}-4\right] - c\lambda + \beta S_0 - \mu_2.
\end{equation}
Fixing $c>0$ and $\lambda>0$, by some calculations, we get
\begin{align*}
&\Delta_c(0) = \beta S_0 - \mu_2 > 0\ \ \ \textrm{if}\ \ \ \Re_0: = \frac{\beta S_0}{\mu_2} > 1,\\
&\frac{\partial \Delta_c(\lambda)}{\partial\lambda} = d_2 \left[\sin\theta\mathrm{e}^{\lambda \sin\theta} - \sin\theta \mathrm{e}^{-\lambda \sin\theta} + \cos\theta\mathrm{e}^{\lambda \cos\theta} - \cos\theta \mathrm{e}^{-\lambda \cos\theta}-4\right] - c,\\
&\frac{\partial^2 \Delta_c(\lambda)}{\partial\lambda^2} = d_2 \left[\sin^2\theta\left(\mathrm{e}^{\lambda \sin\theta} + \mathrm{e}^{-\lambda \sin\theta}\right) + \cos^2\theta\left(\mathrm{e}^{\lambda \cos\theta} + \mathrm{e}^{-\lambda \cos\theta}\right)\right] > 0,\\
&\lim_{c\rightarrow+\infty} \Delta_c (\lambda) = -\infty,\ \ \ \frac{\partial \Delta_c(\lambda)}{\partial c}= -\lambda < 0,\ \ \ \frac{\partial \Delta_c(\lambda)}{\partial\lambda}\bigg|_{(0,c)} = -c < 0,
\end{align*}
Thus, we can obtain the following lemma.
\begin{lemma}\label{WaveSpeed}
Let $\Re_{0}>1.$ There exist $c^*>0$ and $\lambda^*>0$ such that
\[
\frac{\partial \Delta_c(\lambda)}{\partial \lambda}\bigg|_{(\lambda^*,c^*)} = 0\ \ \textrm{and}\ \ \Delta^{c^*}(\lambda^*) = 0.
\]
Furthermore,
\begin{description}
  \item[(i)] if $c=c^*,$ then $\Delta_c(\lambda)=0$ has only one positive real root $\lambda^*;$
  \item[(ii)] if $0<c<c^*,$ then $\Delta_c(\lambda)>0$ for all $\lambda\in(0, +\infty)$;
  \item[(iii)] if $c>c^*,$ then $\Delta_c(\lambda)=0$ has two positive real roots $\lambda_1,$ $\lambda_2$ with $\lambda_1<\lambda^*<\lambda_2$.
\end{description}
\end{lemma}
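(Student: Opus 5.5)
The plan is the standard convexity argument for a characteristic function that is affine in $c$ and strictly convex in $\lambda$. Write
\[
\Delta_c(\lambda)=g(\lambda)-c\lambda,\qquad g(\lambda):=d_2\left[\mathrm{e}^{\lambda\sin\theta}+\mathrm{e}^{-\lambda\sin\theta}+\mathrm{e}^{\lambda\cos\theta}+\mathrm{e}^{-\lambda\cos\theta}-4\right]+\beta S_0-\mu_2 .
\]
The function $g$ does not depend on $c$, and $g(0)=\beta S_0-\mu_2>0$ when $\Re_0>1$. Since $\sin\theta$ and $\cos\theta$ do not both vanish, $g''(\lambda)>0$, so $g$ is strictly convex on $[0,\infty)$. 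Also $g'(0)=0$, and $g$ grows exponentially. (In the paper's displayed formula for $\partial_\lambda\Delta_c$ the stray ``$-4$'' should be dropped, so that $\partial_\lambda\Delta_c(0,c)=-c$ as stated.)

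The key step is to define the critical speed as
\[
c^*:=\inf_{\lambda>0}\frac{g(\lambda)}{\lambda}.
\]
The ratio $h(\lambda):=g(\lambda)/\lambda$ tends to $+\infty$ both as $\lambda\to0^+$ (because $g(0)>0$) and as $\lambda\to+\infty$ (exponential growth). So the infimum is attained at some $\lambda^*>0$, and $c^*=h(\lambda^*)>0$ since $g>0$ on $[0,\infty)$; this last fact holds because $g$ is increasing there, as $g'(0)=0$ and $g$ is convex.

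Next I would show the minimiser is unique. Write $h'(\lambda)=\psi(\lambda)/\lambda^2$ with $\psi(\lambda)=\lambda g'(\lambda)-g(\lambda)$. Then $\psi'(\lambda)=\lambda g''(\lambda)>0$, so $\psi$ is strictly increasing, and $\psi(0)=-g(0)<0$. Hence $h'$ has exactly one zero $\lambda^*$; $h$ decreases on $(0,\lambda^*)$ and increases on $(\lambda^*,\infty)$. At $\lambda^*$ we have $g(\lambda^*)=c^*\lambda^*$ and $g'(\lambda^*)=c^*$, which are exactly $\Delta_{c^*}(\lambda^*)=0$ and $\partial_\lambda\Delta_{c^*}(\lambda^*)=0$.

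The three cases follow from the identity $\Delta_c(\lambda)=\lambda\,(h(\lambda)-c)$ for $\lambda>0$:
\begin{description}
\item[(ii)] If $0<c<c^*$, then $h(\lambda)\ge c^*>c$ for all $\lambda>0$, so $\Delta_c>0$ on $(0,\infty)$.
\item[(i)] If $c=c^*$, then $h(\lambda)=c^*$ only at the unique minimiser $\lambda^*$, so $\lambda^*$ is the only positive root.
\item[(iii)] If $c>c^*$, the strict monotonicity of $h$ on each side of $\lambda^*$, together with $h\to+\infty$ at both ends, gives exactly one solution $\lambda_1\in(0,\lambda^*)$ and one $\lambda_2\in(\lambda^*,\infty)$ of $h=c$. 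These are the two positive roots, with $\lambda_1<\lambda^*<\lambda_2$.
\end{description}

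The only point requiring care is the uniqueness of the minimiser of $h$, which rests on the monotonicity of $\psi$. Everything else is elementary, using only the strict convexity of $g$ and the sign of $g(0)$.
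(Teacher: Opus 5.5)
Your proof is correct and uses the same ingredients the paper lists just before the lemma: $\Delta_c(0)=\beta S_0-\mu_2>0$, strict convexity of $\Delta_c$ in $\lambda$, and the fact that $\Delta_c$ is affine and decreasing in $c$. The paper simply asserts the conclusion from these facts, whereas your characterization $c^*=\min_{\lambda>0}g(\lambda)/\lambda$ and the monotonicity of $\psi=\lambda g'-g$ (via $\psi'=\lambda g''>0$) actually prove the existence and uniqueness of the tangency point $(\lambda^*,c^*)$ and the root structure in (i)--(iii). You are also right that the ``$-4$'' in the paper's formula for $\partial_\lambda\Delta_c$ is a typo.
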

From Lemma \ref{WaveSpeed}, we have
\begin{equation}
\label{WaveSpeedRel}
\Delta_c(\lambda)\left\{
\begin{array}{l}
\vspace{2mm}
\displaystyle   >0\ \ \ {\rm for}\ \ \ \lambda<\lambda_1,\\
\vspace{2mm}
\displaystyle   <0\ \ \ {\rm for}\ \ \ \lambda_1<\lambda<\lambda_2,\\
\displaystyle   >0\ \ \ {\rm for}\ \ \ \lambda>\lambda_2.
\end{array}\right.
\end{equation}
We always fix $c>c^*$ and $\Re_0>1$ for the convenience of our later content in this section.
\subsection{Construction of upper and lower solutions}
Define the following functions:
\begin{equation}
\label{UpLowSolution}\left\{
\begin{array}{l}
\vspace{2mm}
\displaystyle   S^+(\xi)=S_0,\\
\displaystyle   I^+(\xi) = \mathrm{e}^{\lambda_1 \xi}\wedge I_0,
\end{array}\right.
\ \
\left\{
\begin{array}{l}
\vspace{2mm}
\displaystyle   S^-(\xi)=S_0(1-M_1 \mathrm{e}^{\varepsilon_1 \xi})\vee 0,\\
\displaystyle   I^-(\xi)=\mathrm{e}^{\lambda_1\xi}(1-M_2\mathrm{e}^{\varepsilon_2 \xi})\vee 0,
\end{array}\right.
\end{equation}
where $I_0$ is a positive constant satisfying $I_0\geqslant \frac{\beta S_0 - \mu}{\alpha \mu}$, positive constants $M_i$ and $\varepsilon_i(i=1,2)$ will be determined in the following lemmas, the symbol $a\vee b : = \max\{a,b\}$ and $a\wedge b : = \min\{a,b\}$.
Next, we show that \eqref{UpLowSolution} are a pair upper and lower solutions of \eqref{WaveEqu}.

\begin{lemma}\label{SVUp}
The function $S^+(\xi)$ satisfies
\begin{equation}
\label{SVUpEqu}
c{S^+}'(\xi) \geq d_1\mathfrak{J}[S^+](\xi) + \Lambda -  \frac{\beta S^+(\xi)I^-(\xi)}{1+\alpha I^-(\xi)} - \mu_1 S^+(\xi).
\end{equation}
\end{lemma}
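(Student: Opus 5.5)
Since $S^+(\xi)\equiv S_0$ is constant, the plan is to compute each term of \eqref{SVUpEqu} directly and use that $S_0$ is the disease-free susceptible level. Substituting the constant, we have ${S^+}'(\xi)=0$, and every shifted value $S^+(\xi\pm\sin\theta)$, $S^+(\xi\pm\cos\theta)$ equals $S_0$, so $\mathfrak{J}[S^+](\xi)=4S_0-4S_0=0$. The inequality \eqref{SVUpEqu} therefore reduces to
\[
0\geq \Lambda-\mu_1 S_0-\frac{\beta S_0 I^-(\xi)}{1+\alpha I^-(\xi)} .
\]

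Next we use the definition of the disease-free equilibrium. Setting $I=0$ in the first equation of \eqref{WaveEqu} with constant $S$ gives $\Lambda-\mu_1S_0=0$, i.e.\ $S_0=\Lambda/\mu_1$. Hence the right-hand side equals $-\beta S_0 I^-(\xi)/(1+\alpha I^-(\xi))$.

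It remains to check the sign of this term. By \eqref{UpLowSolution}, $I^-(\xi)=\mathrm{e}^{\lambda_1\xi}(1-M_2\mathrm{e}^{\varepsilon_2\xi})\vee 0\geq 0$ for every $\xi\in\mathbb{R}$, whatever positive $M_2,\varepsilon_2$ are later chosen. Since $\beta,S_0,\alpha>0$, the denominator $1+\alpha I^-\geq 1>0$, so $-\beta S_0 I^-/(1+\alpha I^-)\leq 0$. This gives \eqref{SVUpEqu} for all $\xi$.

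There is no real obstacle here. The only points needing care are confirming that $S_0=\Lambda/\mu_1$ (the normalization of the disease-free state) and that the truncation $\vee 0$ keeps $I^-$ nonnegative, so that the sign argument holds independently of the parameters still to be fixed in the later lemmas. Note also that the inequality holds at every $\xi$, including the points where $I^-$ is not differentiable, because ${S^+}$ itself is smooth.
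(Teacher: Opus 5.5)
Your proposal is correct and is exactly the direct verification the paper has in mind; the paper omits the proof as obvious. Since $S^+\equiv S_0$, both ${S^+}'$ and $\mathfrak{J}[S^+]$ vanish, $\Lambda-\mu_1S_0=0$ at the disease-free equilibrium, and the remaining term $-\beta S_0I^-/(1+\alpha I^-)$ is nonpositive because $I^-\geq 0$.
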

Since the proof of Lemma \ref{SVUp} above is obvious, we have omitted the details.
\begin{lemma}\label{IUp}
The function $ I^+(\xi)$ satisfies
\begin{equation}
\label{IUpEqu}
c {I^+}'(\xi) \geqslant d_2\mathfrak{J}[I^+](\xi) + \frac{\beta S^+(\xi) I^+(\xi)}{1+\alpha I^+(\xi)} - \mu_2 I^+(\xi).
\end{equation}
\end{lemma}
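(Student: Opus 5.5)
Since $I^+(\xi)=\mathrm{e}^{\lambda_1\xi}\wedge I_0$ is only piecewise smooth, we verify \eqref{IUpEqu} separately on the two regions where the minimum is attained, interpreting $I^{+\prime}$ as a one-sided derivative at the single kink $\xi_0:=\lambda_1^{-1}\ln I_0$. Throughout we use $S^+=S_0$ and the elementary fact that $I^+(\eta)\le \mathrm{e}^{\lambda_1\eta}$ and $I^+(\eta)\le I_0$ for every $\eta\in\mathbb R$.

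\emph{Case $\xi<\xi_0$.} Here $I^+(\xi)=\mathrm{e}^{\lambda_1\xi}$. Each of the four shifted values satisfies $I^+(\xi\pm\sin\theta)\le \mathrm{e}^{\lambda_1(\xi\pm\sin\theta)}$ and similarly for $\cos\theta$, so
\[
d_2\mathfrak{J}[I^+](\xi)\le d_2\mathrm{e}^{\lambda_1\xi}\left[\mathrm{e}^{\lambda_1\sin\theta}+\mathrm{e}^{-\lambda_1\sin\theta}+\mathrm{e}^{\lambda_1\cos\theta}+\mathrm{e}^{-\lambda_1\cos\theta}-4\right].
\]
Dropping the saturation, $\frac{\beta S_0I^+}{1+\alpha I^+}\le \beta S_0 \mathrm{e}^{\lambda_1\xi}$. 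Hence the right-hand side of \eqref{IUpEqu} minus $cI^{+\prime}=c\lambda_1\mathrm{e}^{\lambda_1\xi}$ is bounded above by $\mathrm{e}^{\lambda_1\xi}\Delta_c(\lambda_1)=0$, using that $\lambda_1$ is a root of \eqref{CharEqu} from Lemma \ref{WaveSpeed}(iii).

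\emph{Case $\xi>\xi_0$.} Here $I^+\equiv I_0$ near $\xi$, so $I^{+\prime}(\xi)=0$, and since $I^+\le I_0$ everywhere, $\mathfrak{J}[I^+](\xi)\le 4I_0-4I_0=0$. It remains to check
\[
\frac{\beta S_0 I_0}{1+\alpha I_0}-\mu_2 I_0\le 0,
\]
which is equivalent to $\beta S_0\le \mu_2(1+\alpha I_0)$, i.e. $I_0\ge \frac{\beta S_0-\mu_2}{\alpha\mu_2}$. This is exactly the standing choice of $I_0$ (reading $\mu$ as $\mu_2$), and it is here that the saturation parameter $\alpha>0$ is essential: without it the constant $I_0$ could not be a supersolution.

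At the kink $\xi=\xi_0$ the left derivative is $c\lambda_1 I_0>0$ and the right one is $0$; the inequality holds for both one-sided derivatives by continuity of the estimates above (alternatively, this is the standard sense used for upper solutions built from minima, which the Schauder argument later only needs almost everywhere). The main obstacle is only the bookkeeping of the nonlocal operator $\mathfrak{J}$ near the kink, where shifted arguments fall in different regions; this is handled uniformly by the two global bounds $I^+\le \mathrm{e}^{\lambda_1\cdot}$ and $I^+\le I_0$, so no case split on the shifts is needed.
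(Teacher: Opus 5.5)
Your proof is correct and follows the same route as the paper: you split at $\xi_0=\lambda_1^{-1}\ln I_0$, use $\Delta_c(\lambda_1)=0$ on the exponential branch, and use the choice $I_0\ge(\beta S_0-\mu_2)/(\alpha\mu_2)$ on the constant branch. You are more careful than the paper on three points it leaves implicit or gets wrong: its global reduction to the linearized inequality actually fails on the constant branch, where, as you note, the saturation is genuinely needed; it does not state the global bounds $I^+\le \mathrm{e}^{\lambda_1\cdot}$ and $I^+\le I_0$; and it does not address the behaviour at the kink.
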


\begin{proof}
Note that
\[
d_2\mathfrak{J}[I^+](\xi) + \frac{\beta S^+(\xi) I^+(\xi)}{1+\alpha I^+(\xi)} - \mu_2 I^+(\xi)
\leqslant d_2\mathfrak{J}[I^+](\xi) + \beta S_0 I^+(\xi) - \mu_2 I^+(\xi),
\]
we only need to show that
\begin{equation}
\label{IUpEqu2}
c {I^+}'(\xi) \geqslant d_2\mathfrak{J}[I^+](\xi) + \beta S_0 I^+(\xi) - \mu_2  I^+(\xi).
\end{equation}
If $\xi < \frac{1}{\lambda_1}\ln I_0$, then $I^+(\xi) = \mathrm{e}^{\lambda_1 \xi}$,
and (\ref{IUpEqu2}) clearly holds by Lemma \ref{WaveSpeed}. If $\xi > \frac{1}{\lambda_1}\ln I_0$, then $I^+(\xi) = I_0 \geqslant \frac{\beta S_0 - \mu}{\alpha \mu}$, one has
\begin{align*}
d_2\mathfrak{J}[I^+](\xi) + \frac{\beta S^+(\xi) I^+(\xi)}{1+\alpha I^+(\xi)} - \mu_2 I^+(\xi) - c {I^+}'(\xi) \leqslant 0.
\end{align*}
This completes the proof.
\end{proof}

\begin{lemma}\label{LemLowS}
For each sufficiently small $0<\varepsilon_1<\lambda_1$ and $M_1>0$ is large enough, the function $S^-(\xi)$ satisfies
\begin{equation}\label{SLowEqu}
c{S^-}'(\xi) \leq d_1\mathfrak{J}[S^-](\xi) + \Lambda - \frac{\beta S^-(\xi)I^+(\xi)}{1+\alpha I^+(\xi)} - \mu_1 S^-(\xi)
\end{equation}
with $\xi\neq\frac{1}{\varepsilon_1}\ln\frac{1}{M_1}:=\mathfrak{B}_1$.
\end{lemma}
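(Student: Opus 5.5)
The plan is to split the real line at $\mathfrak{B}_1$ and treat the two sides separately. Throughout we use that the map $\phi\mapsto\phi\vee 0$ interacts well with the discrete operator $\mathfrak{J}$: if $S^-(\xi)=0$ then $\mathfrak{J}[S^-](\xi)\ge 0$, and if $S^-(\xi)>0$ then $\mathfrak{J}[S^-](\xi)\ge \mathfrak{J}[\tilde S](\xi)$. Here $\tilde S(\xi):=S_0(1-M_1\mathrm{e}^{\varepsilon_1\xi})$ is the smooth function with $S^-=\tilde S\vee 0$. Both facts hold because $S^-\ge\tilde S$ and $S^-\ge 0$ pointwise, while the coefficient $-4$ only multiplies the value at the point $\xi$ itself.

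\textbf{Case $\xi>\mathfrak{B}_1$.} Here $S^-\equiv 0$ near $\xi$, so ${S^-}'(\xi)=0$, and the first observation gives $\mathfrak{J}[S^-](\xi)\ge 0$. The right-hand side of \eqref{SLowEqu} is then at least $\Lambda>0$, so the inequality holds trivially.

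\textbf{Case $\xi<\mathfrak{B}_1$.} Here $S^-(\xi)=\tilde S(\xi)>0$. By the second observation it suffices to show
\[
c\tilde S'(\xi)-d_1\mathfrak{J}[\tilde S](\xi)-\Lambda+\frac{\beta \tilde S I^+}{1+\alpha I^+}+\mu_1\tilde S\le 0 .
\]
We use $\Lambda=\mu_1 S_0$; the paper implicitly takes $S_0=\Lambda/\mu_1$ as the disease-free state. We also use $\tilde S\le S_0$ and $\frac{I^+}{1+\alpha I^+}\le I^+\le \mathrm{e}^{\lambda_1\xi}$. A direct computation gives
\[
c\tilde S'-d_1\mathfrak{J}[\tilde S]=-S_0M_1\mathrm{e}^{\varepsilon_1\xi}\Big(c\varepsilon_1-d_1\big[\mathrm{e}^{\varepsilon_1\sin\theta}+\mathrm{e}^{-\varepsilon_1\sin\theta}+\mathrm{e}^{\varepsilon_1\cos\theta}+\mathrm{e}^{-\varepsilon_1\cos\theta}-4\big]\Big).
\]
In addition, $-\Lambda+\mu_1\tilde S=-\mu_1S_0M_1\mathrm{e}^{\varepsilon_1\xi}\le 0$ can simply be dropped. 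Hence the left side is bounded above by
\[
\mathrm{e}^{\varepsilon_1\xi}\Big(-S_0M_1K(\varepsilon_1)+\beta S_0\,\mathrm{e}^{(\lambda_1-\varepsilon_1)\xi}\Big),
\]
where $K(\varepsilon_1)$ denotes the bracketed quantity $c\varepsilon_1-d_1[\cdots]$.

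\textbf{Choice of constants (the main step).} We must ensure $K(\varepsilon_1)>0$. The bracket $\mathrm{e}^{\varepsilon\sin\theta}+\mathrm{e}^{-\varepsilon\sin\theta}+\cdots-4$ is $O(\varepsilon^2)$, while $c\varepsilon$ is linear in $\varepsilon$. So for $\varepsilon_1$ sufficiently small, and also $\varepsilon_1<\lambda_1$, we get $K(\varepsilon_1)>0$; this is exactly where smallness of $\varepsilon_1$ enters.

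Next we control the exponential term. Since $\xi<\mathfrak{B}_1<0$ once $M_1>1$, and $\lambda_1-\varepsilon_1>0$, we have $\mathrm{e}^{(\lambda_1-\varepsilon_1)\xi}\le 1$. It therefore suffices to take $M_1\ge \beta/K(\varepsilon_1)$ and $M_1>1$. This gives \eqref{SLowEqu} for $\xi<\mathfrak{B}_1$ and completes the plan.

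The only delicate point is justifying the comparison of $\mathfrak{J}[S^-]$ with $\mathfrak{J}[\tilde S]$ across the kink at $\mathfrak{B}_1$, since the shifts $\pm\sin\theta,\pm\cos\theta$ can straddle it. The pointwise inequality $S^-\ge\tilde S$ handles this uniformly, and that is why the statement only excludes the single point $\xi=\mathfrak{B}_1$, where ${S^-}'$ fails to exist.
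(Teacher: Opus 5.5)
Your proof is correct and follows essentially the same route as the paper. Both split at $\mathfrak{B}_1$, use $S^-\ge 0$ and $S^-\ge S_0(1-M_1\mathrm{e}^{\varepsilon_1\cdot})$ at the shifted points to bound $\mathfrak{J}[S^-]$ from below, bound the incidence term by $\beta S_0\mathrm{e}^{\lambda_1\xi}$, and use $\mathrm{e}^{(\lambda_1-\varepsilon_1)\xi}\le 1$ on $\xi<\mathfrak{B}_1<0$ to fix $M_1$. The only cosmetic difference is that you discard the favorable term $-\mu_1S_0M_1\mathrm{e}^{\varepsilon_1\xi}$ and get positivity from $c\varepsilon_1$, whereas the paper keeps $\mu_1$ in the bracket, so its sign condition $d_1[\cdots]-\mu_1-c\varepsilon_1<0$ holds for small $\varepsilon_1$ even without the drift term.
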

\begin{proof}
If $\xi>\mathfrak{X}_1$, then inequality (\ref{SLowEqu}) holds since $S^-(\xi) =0$, $S^-(\xi\pm\sin\theta) \geqslant 0$  and $S^-(\xi\pm\cos\theta) \geqslant 0$.
If $\xi<\mathfrak{B}_1$, then
\[
S^-(\xi)=S_0(1-M_1 \mathrm{e}^{\varepsilon_1 \xi}),\ \ S^-(\xi\pm\sin\theta)\geqslant S_0(1-M_1 \mathrm{e}^{\varepsilon_1 (\xi\pm\sin\theta)})
\]
and
\[
S^-(\xi\pm\cos\theta)\geqslant S_0(1-M_1 \mathrm{e}^{\varepsilon_1 (\xi\pm\cos\theta)}).
\]
Since $I^+(\xi)\geqslant 0$, we have
\begin{align*}
&\ d_1\mathfrak{J}[S^-](\xi) + \Lambda - \frac{\beta S^-(\xi)I^+(\xi)}{1+\alpha I^+(\xi)} - \mu_1 S^-(\xi) - c{S^-}'(\xi)\\
\geq &\ d_1\mathfrak{J}[S^-](\xi) + \Lambda - \beta S^-(\xi)I^+(\xi) - \mu_1 S^-(\xi) - c{S^-}'(\xi)\\
\geq &\ \mathrm{e}^{\varepsilon_1 \xi} S_0 \left[-M_1 (d_1\mathrm{e}^{\varepsilon_1\sin\theta} + d_1\mathrm{e}^{-\varepsilon_1\sin\theta} + d_1\mathrm{e}^{\varepsilon_1\cos\theta} + d_1\mathrm{e}^{-\varepsilon_1\cos\theta} - 4d_1 - \mu_1 - c \varepsilon_1) -\beta \mathrm{e}^{\lambda_1\xi}\mathrm{e}^{-\varepsilon_1\xi}\right].
\end{align*}
Select $0<\varepsilon_1<\lambda_1$ small enough such that
\[
d_1\mathrm{e}^{\varepsilon_1\sin\theta} + d_1\mathrm{e}^{-\varepsilon_1\sin\theta} + d_1\mathrm{e}^{\varepsilon_1\cos\theta} + d_1\mathrm{e}^{-\varepsilon_1\cos\theta} - 4d_1 - \mu_1 - c \varepsilon_1 < 0,
\]
and note that $\mathrm{e}^{(\lambda_1 - \varepsilon_1)\xi}\leq 1$ since $\xi<\mathfrak{B}_1<0$. Therefore, in order to ensure that inequality (\ref{SLowEqu}) holds,we need to select
\[
M_1 \geq -\frac{\beta}{d_1\mathrm{e}^{\varepsilon_1\sin\theta} + d_1\mathrm{e}^{-\varepsilon_1\sin\theta} + d_1\mathrm{e}^{\varepsilon_1\cos\theta} + d_1\mathrm{e}^{-\varepsilon_1\cos\theta} - 4d_1 - \mu_1 - c \varepsilon_1}
\]
large enough to ensure that the inequality  holds. Thus, this finishes the proof of Lemma \ref{LemLowS}.
\end{proof}

\begin{lemma}\label{LemLowI}
For each sufficiently small $0<\varepsilon_2<\lambda_1$  and $M_2>0$ is large enough, the function $ I^-(\xi)$ satisfies
\begin{equation}\label{lowIEqu}
c {I^-}'(\xi) \leqslant d_2\mathfrak{J}[I^-](\xi) + \frac{\beta S^-(\xi) I^-(\xi)}{1+\alpha I^-(\xi)} - \mu_2 I^-(\xi).
\end{equation}
with $\xi \neq \frac{1}{\varepsilon_2}\ln \frac{1}{M_2}:=\mathfrak{B}_2$.
\end{lemma}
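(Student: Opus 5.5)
We split into the two regions determined by the breakpoint $\mathfrak{B}_2=\frac{1}{\varepsilon_2}\ln\frac{1}{M_2}<0$, following the same scheme as in Lemma \ref{LemLowS}.

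\textbf{Region $\xi>\mathfrak{B}_2$.} Here $I^-(\xi)=0$ and ${I^-}'(\xi)=0$. The neighbouring values $I^-(\xi\pm\sin\theta)$ and $I^-(\xi\pm\cos\theta)$ are all nonnegative, so $\mathfrak{J}[I^-](\xi)\geq0$. The nonlinear term also vanishes. Hence \eqref{lowIEqu} holds trivially.

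\textbf{Region $\xi<\mathfrak{B}_2$.} Here $I^-(\xi)=\mathrm{e}^{\lambda_1\xi}-M_2\mathrm{e}^{(\lambda_1+\varepsilon_2)\xi}$. For the shifted values we use the bound $I^-(\eta)\geq \mathrm{e}^{\lambda_1\eta}-M_2\mathrm{e}^{(\lambda_1+\varepsilon_2)\eta}$, valid for every $\eta$ because $a\vee0\geq a$. This gives
\[
\mathfrak{J}[I^-](\xi)\geq \mathfrak{J}[\mathrm{e}^{\lambda_1\cdot}](\xi)-M_2\mathfrak{J}[\mathrm{e}^{(\lambda_1+\varepsilon_2)\cdot}](\xi).
\]
For the nonlinearity, write
\[
\frac{\beta S^-I^-}{1+\alpha I^-}\geq \beta S^-I^- -\alpha\beta S^- (I^-)^2\geq \beta S_0 I^- -\beta S_0M_1\mathrm{e}^{\varepsilon_1\xi}I^- -\alpha\beta S_0 (I^-)^2 .
\]
This uses $S^-\geq S_0(1-M_1\mathrm{e}^{\varepsilon_1\xi})$ and $S^-\leq S_0$. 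We also use $0\leq I^-\leq \mathrm{e}^{\lambda_1\xi}$.

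Now collect terms. The $\mathrm{e}^{\lambda_1\xi}$ coefficient is exactly $\Delta_c(\lambda_1)=0$, which cancels. The $M_2\mathrm{e}^{(\lambda_1+\varepsilon_2)\xi}$ coefficient is $-\Delta_c(\lambda_1+\varepsilon_2)$. Therefore
\[
\mathrm{RHS}-c{I^-}'\geq -M_2\Delta_c(\lambda_1+\varepsilon_2)\mathrm{e}^{(\lambda_1+\varepsilon_2)\xi}-\beta S_0M_1\mathrm{e}^{(\lambda_1+\varepsilon_1)\xi}-\alpha\beta S_0\mathrm{e}^{2\lambda_1\xi}.
\]
Choose $\varepsilon_2<\min\{\varepsilon_1,\lambda_1,\lambda_2-\lambda_1\}$. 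Then \eqref{WaveSpeedRel} gives $-\Delta_c(\lambda_1+\varepsilon_2)>0$. Factor out $\mathrm{e}^{(\lambda_1+\varepsilon_2)\xi}$. Since $\xi<\mathfrak{B}_2<0$, we have $\mathrm{e}^{(\varepsilon_1-\varepsilon_2)\xi}\leq1$ and $\mathrm{e}^{(\lambda_1-\varepsilon_2)\xi}\leq1$. The bracket is therefore bounded below by
\[
-M_2\Delta_c(\lambda_1+\varepsilon_2)-\beta S_0M_1-\alpha\beta S_0 .
\]

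\textbf{Choice of $M_2$.} It remains to take
\[
M_2\geq \frac{\beta S_0(M_1+\alpha)}{-\Delta_c(\lambda_1+\varepsilon_2)},
\]
which makes the bracket nonnegative. We also take $M_2$ large enough that $\mathfrak{B}_2\leq\mathfrak{B}_1$. Then $S^-(\xi)=S_0(1-M_1\mathrm{e}^{\varepsilon_1\xi})$ is the active branch wherever $I^->0$, so the lower bound on $S^-$ is the correct formula throughout this region.

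\textbf{Main obstacle.} The delicate point is the ordering of exponents. The error terms from $S^-$ and from the saturation must decay faster than the correction term $\mathrm{e}^{(\lambda_1+\varepsilon_2)\xi}$ as $\xi\to-\infty$. This is what forces $\varepsilon_2<\varepsilon_1$ and $\varepsilon_2<\lambda_1$. The sign of $\Delta_c$ on $(\lambda_1,\lambda_2)$, given by \eqref{WaveSpeedRel}, then supplies the positive coefficient that absorbs these errors.
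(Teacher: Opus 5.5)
Your proof is correct and follows the same route as the paper. You split at $\mathfrak{B}_2$, cancel the leading term via $\Delta_c(\lambda_1)=0$, and absorb the nonlinear error into $-M_2\Delta_c(\lambda_1+\varepsilon_2)\mathrm{e}^{(\lambda_1+\varepsilon_2)\xi}$ by taking $M_2$ large. Your version is more careful than the paper's, which replaces $S^-$ by $S_0$ through the informal claim $S^-(\xi)\to S_0$ and therefore never records that $\varepsilon_2\leq\varepsilon_1$ is needed to control $\beta S_0M_1\mathrm{e}^{\varepsilon_1\xi}I^-$. Your extra requirement $\mathfrak{B}_2\leq\mathfrak{B}_1$ is harmless but unnecessary, because $S^-\geq S_0(1-M_1\mathrm{e}^{\varepsilon_1\xi})$ holds everywhere.
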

\begin{proof}
If $\xi > \mathfrak{B}_2$, then inequality (\ref{lowIEqu}) holds since $I^-(\xi)=0$, $I^-(\xi\pm\sin\theta) \geqslant 0$  and $I^-(\xi\pm\cos\theta) \geqslant 0$. If $\xi < \mathfrak{B}_2$, then
\[
I^-(\xi)=\mathrm{e}^{\lambda_1\xi}(1-M_2\mathrm{e}^{\varepsilon_2 \xi}),\ \ I^-(\xi\pm\sin\theta)\geqslant \mathrm{e}^{\lambda_1(\xi\pm\sin\theta)}(1-M_2\mathrm{e}^{\varepsilon_2 (\xi\pm\sin\theta)})
\]
and
\[
I^-(\xi\pm\cos\theta)\geqslant \mathrm{e}^{\lambda_1(\xi\pm\cos\theta)}(1-M_2\mathrm{e}^{\varepsilon_2 (\xi\pm\cos\theta)}).
\]
Transforming the inequality (\ref{lowIEqu}) above is equivalent to the following inequality:
\begin{align}\label{I1}
& \ \beta S_0  I^-(\xi) - \frac{\beta S^-(\xi) I^-(\xi)}{1+\alpha I^-(\xi)}\\ \nonumber
\leq & \ d_2\mathfrak{J}[I^-] (\xi) - \mu_2  I^-(\xi) - c{ I^-}'(\xi) + \beta S_0  I^-(\xi).
\end{align}
For $(\ref{I1})$, we have
\begin{align}\label{Equ1}
\nonumber \beta S_0  I^-(\xi) - \frac{\beta S^-(\xi) I^-(\xi)}{1+\alpha I^-(\xi)} = \frac{\beta S_0  I^-(\xi)+\alpha\beta S_0  [I^-(\xi)]^{2}-\beta S^-(\xi) I^-(\xi)}{1+\alpha I^-(\xi)}.\\
\end{align}
Recalling that $\xi<\mathfrak{B}_2$, we can choose a sufficiently large $M_2$ so that
\[
 \  S^-(\xi)\rightarrow S_0.
\]
It follows from (\ref{Equ1}) that
\[
\beta S_0  I^-(\xi) - \frac{\beta S^-(\xi) I^-(\xi)}{1+\alpha I^-(\xi)} \leq \alpha\beta S_0 [ I^-(\xi)]^2.
\]
Furthermore, the right-hand of (\ref{I1}) is satisfy
\[
d_2\mathfrak{J}[ I^-](\xi) - \mu_2  I^-(\xi) - c{ I^-}'(\xi) + \beta S_0  I^-(\xi) \geq \mathrm{e}^{\lambda_1\xi}\Delta_c(\lambda_1,c) - M_2 \mathrm{e}^{\lambda_1+\varepsilon_2}\Delta_c(\lambda_1+\varepsilon_2,c).
\]
Employing the definition of $\Delta_c(\lambda)$ and the inequality $\alpha\beta S_0[ I^-(\xi)]^2\leq \alpha\beta S_0 \mathrm{e}^{2\lambda_1\xi}$, it can be observed through (\ref{WaveSpeedRel}) that $\Delta_c(\lambda_1+\varepsilon_2)<0$ holds for sufficiently small $\varepsilon_2>0$. To complete the proof, it suffices to demonstrate that
\[
\mathrm{e}^{(\lambda_1 - \varepsilon_2)\xi} \leq - M_2 \Delta_c(\lambda_1+\varepsilon_2).
\]
This inequality is guaranteed when $M_2$ is selected sufficiently large, as the left-hand side asymptotically approaches zero while the right-hand side diverges to positive infinity under the limit $M_2\rightarrow+\infty$. This ends the proof.
\end{proof}

\section{Existence of traveling wave solution}
\subsection{Truncated problem}
Let $X>\max\{-\mathfrak{B}_1,-\mathfrak{B}_2\}$. Define the following set
\begin{equation*}
\Gamma_X := \left\{(\phi, \psi)\in C([-X,X],\mathbb{R}^2)\left|
\begin{array}{l}
\vspace{2mm}
\displaystyle   S^-(\xi)\leq \phi(\xi) \leq S^+(\xi),\  \phi(-X)=S^-(-X),\\
\vspace{2mm}
\displaystyle   I^-(\xi)\leq \psi(\xi) \leq I^+(\xi),\ \ \psi(-X)=I^-(-X),\\
\displaystyle   \forall\xi\in[-X,X].\\
\end{array}\right.\right\}.
\end{equation*}
Obviously, $\Gamma_X$ is a nonempty bounded closed convex set in $C([-X,X],\mathbb{R}^2)$. For any $(\phi,\psi)\in C([-X,X],\mathbb{R}^2)$, it can be generalized as
\begin{equation*}
\hat{\phi}(\xi)=\left\{
\begin{array}{ll}
\displaystyle   \phi(X), &\mbox{for $\xi>X$,}
\\
\displaystyle   \phi(\xi), &\mbox{for $\xi\in[-X, X]$,}
\\
\displaystyle   S^-(\xi), &\mbox{for $\xi< -X$,}\\
\end{array}\right.\ \ \
\hat{\psi}(\xi)=\left\{
\begin{array}{ll}
\displaystyle   \psi(X), &\mbox{for $\xi>X$,}
\\
\displaystyle   \psi(\xi), &\mbox{for $\xi\in[-X, X]$,}
\\
\displaystyle   I^-(\xi), &\mbox{for $\xi< -X$.}\\
\end{array}\right.
\end{equation*}
Next, we consider the following truncation initial problem
\begin{equation}
\label{TruPro}\left\{
\begin{array}{l}
\vspace{2mm}
\displaystyle   cS'(\xi) + (4d_1+\mu_1+\alpha)S(\xi) = H_1(\phi,\psi),\\
\vspace{2mm}
\displaystyle   cI'(\xi) + (4d_2+\mu_2)I(\xi) = H_2(\phi,\psi),\\
\displaystyle   (S,I)(-X) = (S^-,I^-)(-X),
\end{array}\right.
\end{equation}
with
\begin{align*}
H_1(\phi,\psi) : = &\ d_{1}\phi(\xi + \sin\theta) + d_{1}\phi(\xi - \sin\theta) + d_{1}\phi(\xi + \cos\theta) + d_{1}\phi(\xi - \cos\theta)\\
 &\ + \Lambda - \frac{\beta \phi(\xi)\psi(\xi)}{1+\alpha \psi(\xi)} + \alpha\phi(\xi), 
\end{align*}
and
\begin{align*}
H_2(\phi,\psi) : =\ d_{2}\hat{\psi}(\xi + \cos\theta) + d_{2}\hat{\psi}(\xi - \cos\theta)+ \frac{\beta \phi(\xi)\psi(\xi)}{1+\alpha \psi(\xi)},
\end{align*}
where $(\phi,\psi)\in \Gamma_X$ and $\alpha$ is a sufficiently large constant that $H_1(\phi,\psi)$ does not decrease on $\phi(\xi)$. According to the theory of ordinary differential equations, the system of equation (\ref{TruPro}) has a unique solution $(S_X(\xi),I_X(\xi))\in C^1([-X,X],\mathbb{R}^2)$. Next, we define an operator
\[
\mathcal{P} = (\mathcal{P}_1,\mathcal{P}_2):\Gamma_X\rightarrow C^1\left([-X,X],\mathbb{R}^2\right)
\]
by
\[
S_X(\xi)=\mathcal{P}_1(\phi,\psi)(\xi)\ \ {\rm and}\ \ I_X(\xi)=\mathcal{P}_2(\phi,\psi)(\xi).
\]
Now, we choosing Schauder's fixed point theorem to prove that operator $\mathcal{P}=(\mathcal{P}_1,\mathcal{P}_2)$ admits a fixed point in $\Gamma_X$.

\begin{lemma}
The operator $\mathcal{P}=(\mathcal{P}_1,\mathcal{P}_2)$ maps $\Gamma_X$ into itself.
\end{lemma}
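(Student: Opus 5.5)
We will show that for every $(\phi,\psi)\in\Gamma_X$ the solution $(S_X,I_X)=\mathcal{P}(\phi,\psi)$ of \eqref{TruPro} satisfies $S^-\le S_X\le S^+$ and $I^-\le I_X\le I^+$ on $[-X,X]$. The boundary conditions $S_X(-X)=S^-(-X)$ and $I_X(-X)=I^-(-X)$ hold by construction, so only the four inequalities need work. All four are proved the same way. We compare $S_X$ (resp.\ $I_X$) with the corresponding upper or lower function via the explicit solution formula of the linear first-order ODE,
\[
S_X(\xi)=\mathrm{e}^{-\frac{4d_1+\mu_1+\alpha}{c}(\xi+X)}S^-(-X)+\frac1c\int_{-X}^{\xi}\mathrm{e}^{-\frac{4d_1+\mu_1+\alpha}{c}(\xi-s)}H_1(\phi,\psi)(s)\,\d s,
\]
and its analogue for $I_X$. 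The key structural facts are the following. First, $H_1$ is nondecreasing in $\phi$ (by the choice of $\alpha$) and nonincreasing in $\psi$, since $\psi\mapsto \psi/(1+\alpha\psi)$ is increasing. Second, $H_2$ is nondecreasing in both $\phi$ and $\psi$ (the shifted terms enter with positive coefficients $d_2$). Third, the extensions $\hat\phi,\hat\psi$ preserve the order $S^-\le\hat\phi\le S^+$, $I^-\le\hat\psi\le I^+$ on all of $\mathbb{R}$. For $\xi<-X$ this is because the extension equals the lower function. For $\xi>X$ it holds because $S^+,I^+$ are nondecreasing, so $\phi(X)\le S^+(X)\le S^+(\xi)$, and because $S^-=I^-=0$ there, since $X>-\mathfrak{B}_1,-\mathfrak{B}_2$ and we may take $X$ beyond $\mathfrak{B}_i$ as well.

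For the upper bound on $S_X$ we use monotonicity: $H_1(\phi,\psi)\le H_1(S^+,I^-)$. Lemma \ref{SVUp}, rewritten in the form
\[
c{S^+}'+(4d_1+\mu_1+\alpha)S^+\ge H_1(S^+,I^-),
\]
then shows that $w=S^+-S_X$ satisfies $cw'+(4d_1+\mu_1+\alpha)w\ge0$ with $w(-X)\ge0$. Integrating gives $w\ge0$. Similarly, $H_2(\phi,\psi)\le H_2(S^+,I^+)$, and Lemma \ref{IUp} yields $I_X\le I^+$. For the lower bounds we use $H_1(\phi,\psi)\ge H_1(S^-,I^+)$ and $H_2(\phi,\psi)\ge H_2(S^-,I^-)$ together with Lemmas \ref{LemLowS} and \ref{LemLowI}. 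Here the lower functions are only piecewise $C^1$, with corners at $\mathfrak{B}_1$ and $\mathfrak{B}_2$. We handle this by integrating the differential inequality on each subinterval separately. At a corner, the one-sided derivatives satisfy ${S^-}'(\mathfrak{B}_1^-)\le {S^-}'(\mathfrak{B}_1^+)=0$, since $S^-$ is a maximum of two functions. Therefore the integrated inequality $w(\xi)\ge \mathrm{e}^{-k(\xi-\eta)}w(\eta)$ passes across the kink, and continuity of $S^-$ suffices.

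The main obstacle is bookkeeping rather than analysis. The operator in \eqref{TruPro} must be matched exactly to the inequalities of Lemmas \ref{SVUp}--\ref{LemLowI}, which are written with $\mathfrak{J}$, i.e.\ including all four shifts, while $H_2$ as written contains only the $\cos\theta$ shifts. We would interpret $H_2$ as containing all four shifted terms $d_2\hat\psi(\xi\pm\sin\theta)+d_2\hat\psi(\xi\pm\cos\theta)$, and similarly use $\hat\phi$ in $H_1$, so that adding $(4d_i+\mu_i)$ times the unknown to both sides reproduces $c\,(\cdot)'=d_i\mathfrak{J}[\cdot]+\dots$. The second delicate point is verifying that the extensions respect the order near $\xi=X$, which uses the monotonicity of $S^\pm,I^\pm$ noted above. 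Once these checks are in place, the four comparisons are routine applications of the linear ODE solution formula.
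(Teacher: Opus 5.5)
Your proposal is correct and follows the same route as the paper: a comparison argument for the linear first-order problem \eqref{TruPro}, using the monotonicity of $H_1$ and $H_2$ in $(\phi,\psi)$ together with the upper/lower solution inequalities of Lemmas \ref{SVUp}--\ref{LemLowI}. The paper leaves several points implicit, and you handle each of them correctly: that $\hat\phi,\hat\psi$ stay between the bounds on all of $\mathbb{R}$, that integrating piece by piece and using continuity takes care of the corners, and that $H_2$ has to be read with all four shifts for the lower-solution inequality to apply.
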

\begin{proof}
Firstly, we prove that for any $\xi\in[-X,X]$, $S^-(\xi)\leq S_X(\xi)$. If $\xi\in(\mathfrak{X}_1, X),$  then $S^-(\xi)=0$, which is the lower solution of the first equation in equation (\ref{TruPro}). If $\xi\in(-X,\mathfrak{B}_1),$ then $S^-(\xi)=S_0(1-M_1 \mathrm{e}^{\varepsilon_1 \xi})$, according to Lemma \ref{LemLowS}, we obtain
\begin{align*}
&c{S^-}'(\xi) + (4d_1+\mu_1+\alpha)S^-(\xi) - d_{1}(\phi(\xi + \sin\theta) + \phi(\xi - \sin\theta) + \phi(\xi + \cos\theta)
+\phi(\xi - \cos\theta))\\
&- \Lambda + \frac{\beta \phi(\xi)\psi(\xi)}{1+\alpha \psi(\xi)} - \alpha\phi(\xi)\\
\leq & c{S^-}'(\xi) - d_1\mathfrak{J}[S^-](\xi) - \Lambda + \mu_1 S^-(\xi) + \beta S^-(\xi) I^+(\xi)\\
\leq & 0.
\end{align*}
This means that $S^-(\xi)=S_0(1-M_1 \mathrm{e}^{\varepsilon_1 \xi})$ is the lower solution of the first equation in equation (\ref{TruPro}). Therefore, for any $\xi\in[-X,X],$ $S^-(\xi)\leq S_X(\xi)$.

Secondly, we prove that $S_X(\xi)\leq S^+(\xi) = S_0$ for any $\xi\in[-X,X].$ Actually, 
\begin{align*}
&c{S^+}'(\xi) + (4d_1+\mu_1+\alpha)S^+(\xi) - d_{1}\phi(\xi + \sin\theta) - d_{1}\phi(\xi - \sin\theta) - d_{1}\phi(\xi + \cos\theta) - d_{1}\phi(\xi - \cos\theta)\\
& - \Lambda + \frac{\beta \phi(\xi)\psi(\xi)}{1+\alpha \psi(\xi)} - \alpha\phi(\xi)\\
\geq & \beta S_0I^-(\xi)\\
\geq & 0,
\end{align*}
therefore, $S^+(\xi)=S_0$  is the upper solution of the first equation of the truncated initial problem (\ref{TruPro}), which gives us $S_X(\xi)\leq S_0$ for  $\forall \xi\in[-X,X].$

Similarly, we can use a similar approach to prove that for any $\xi\in[-X,X],$~$I^-(\xi)\leq I_X(\xi)\leq I^+(\xi)$. This completes the proof.
\end{proof}

\begin{lemma}

The operator $\mathcal{P}:\Gamma_X\rightarrow\Gamma_X$ is completely continuous.
\end{lemma}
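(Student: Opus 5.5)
The plan is to solve the linear problem (\ref{TruPro}) explicitly by variation of constants and read off both continuity and compactness from that representation. With $\kappa_1:=(4d_1+\mu_1+\alpha)/c$ and $\kappa_2:=(4d_2+\mu_2)/c$, the solution is
\[
S_X(\xi)=S^-(-X)\mathrm{e}^{-\kappa_1(\xi+X)}+\frac1c\int_{-X}^{\xi}\mathrm{e}^{-\kappa_1(\xi-s)}H_1(\phi,\psi)(s)\,\d s,
\]
and the same formula with $\kappa_2$, $I^-(-X)$ and $H_2$ gives $I_X$.

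Here $H_1$ and $H_2$ are evaluated through the extensions $\hat\phi$ and $\hat\psi$, because the shifted arguments $\xi\pm\sin\theta$ and $\xi\pm\cos\theta$ may leave $[-X,X]$. We use the elementary bound $\|\hat\phi_1-\hat\phi_2\|_{L^\infty(\mathbb R)}\le\|\phi_1-\phi_2\|_{C[-X,X]}$. This holds because the extension is the same fixed function $S^-$ (respectively $I^-$) for $\xi<-X$, and is constant equal to the endpoint value for $\xi>X$.

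\textbf{Continuity.} Take $(\phi_k,\psi_k)\to(\phi,\psi)$ in $\Gamma_X$. On $\Gamma_X$ we have $0\le\phi\le S_0$ and $0\le\psi\le I_0$. The function $(u,v)\mapsto \beta uv/(1+\alpha v)$ is Lipschitz on $[0,S_0]\times[0,I_0]$, with constant $L=\beta(I_0+S_0)$ since $1+\alpha v\ge1$. Combining this with the extension bound gives
\[
\|H_i(\phi_k,\psi_k)-H_i(\phi,\psi)\|_{C[-X,X]}\le C\big(\|\phi_k-\phi\|+\|\psi_k-\psi\|\big),
\]
with $C$ depending only on $d_1,d_2,\alpha,\beta,S_0,I_0$. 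The integral formula then yields
\[
\|\mathcal P(\phi_k,\psi_k)-\mathcal P(\phi,\psi)\|_{C[-X,X]}\le \frac{2X}{c}\,C\big(\|\phi_k-\phi\|+\|\psi_k-\psi\|\big)\to0 .
\]
So $\mathcal P$ is in fact Lipschitz continuous.

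\textbf{Compactness.} For $(\phi,\psi)\in\Gamma_X$, the quantities $|H_1|$ and $|H_2|$ are bounded by a constant $K$ independent of $(\phi,\psi)$. For instance, $|H_1|\le 4d_1S_0+\Lambda+\beta S_0I_0+\alpha S_0$, and $H_2$ is bounded similarly. Since the previous lemma shows $\mathcal P$ maps $\Gamma_X$ into itself, $S_X$ and $I_X$ are uniformly bounded. From (\ref{TruPro}),
\[
|S_X'|\le \big(K+(4d_1+\mu_1+\alpha)S_0\big)/c,
\]
and $|I_X'|$ admits an analogous uniform bound. The family $\mathcal P(\Gamma_X)$ is therefore uniformly bounded and equicontinuous on the compact interval $[-X,X]$. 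By the Arzel\`a--Ascoli theorem it is relatively compact in $C([-X,X],\mathbb R^2)$. Together with continuity, this shows $\mathcal P$ is completely continuous.

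The main point requiring care is the treatment of the shifted terms through $\hat\phi$ and $\hat\psi$. One must check that the extension map is continuous (indeed nonexpansive) in the sup norm and that it preserves the bounds from $\Gamma_X$ (namely $S^-\le S_0$ and $I^-\le I^+$ for $\xi<-X$), so that the uniform bound $K$ on $H_i$ remains valid. The rest of the argument is routine once the explicit solution formula is written down.
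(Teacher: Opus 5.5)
Your proof is correct and takes essentially the same route as the paper. Both use the variation-of-constants representation of $(S_X,I_X)$, a Lipschitz estimate on the nonlinear and shifted terms (through the extensions $\hat\phi,\hat\psi$) for continuity, and the uniform bounds on $S_X'$, $I_X'$ read off from (\ref{TruPro}) together with Arzel\`a--Ascoli for compactness. Your write-up is cleaner on details the paper glosses over or gets slightly wrong: the correct kernel $\mathrm{e}^{-\kappa_1(\xi-s)}$ in the integral formula, the denominator $1+\alpha\psi$ in the Lipschitz bound, and the nonexpansiveness of the extension map.
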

\begin{proof}
Suppose $(\phi_i(\xi),\psi_i(\xi))\in\Gamma_X,\ i=1,2.$ Denote
\begin{align*}
S_{X,i}(\xi)=\mathcal{P}_1(\phi_i(\xi),\psi_i(\xi))\ \ {\rm and}\ \ I_{X,i}(\xi)=\mathcal{P}_2(\phi_i(\xi),\psi_i(\xi)).
\end{align*}
Next, prove that operator $\mathcal{P}$  is continuous. By direct calculation, we obtain
\[
S_X(\xi) = S^-(-X) \mathrm{e}^{-\frac{4d_1+\mu_1+\alpha}{c}(\xi+X)} + \frac{1}{c}\int_{-X}^\xi \mathrm{e} ^{\frac{4d_1+\mu_1+\alpha}{c}(\tau+X)}H_1(\phi,\psi)(\tau)\d \tau,
\]
and
\[
I_X(\xi) = I^-(-X) \mathrm{e}^{-\frac{4d_2+\mu_2}{c}(\xi+X)} + \frac{1}{c}\int_{-X}^\xi \mathrm{e}^{\frac{4d_2+\mu_2}{c}(\tau+X)}H_2(\phi,\psi)(\tau)\d \tau,
\]
where $H_i(\phi,\psi)(i=1,2)$ are defined in (\ref{TruPro}).
For any $(\phi_i, \psi_i)\in\Gamma_X$, $i=1,2$, we get
\begin{align*}
&\ |\phi_1(\xi)\psi_1(\xi) - \phi_2(\xi)\psi_2(\xi)|\\
\leq & \ |\phi_1(\xi)\psi_1(\xi) - \phi_1(\xi)\psi_2(\xi)|+|\phi_1(\xi)\psi_2(\xi) - \phi_2(\xi)\psi_2(\xi)|\\
\leq & \ S_0  \max_{\xi\in[-X,X]}|\psi_1(\xi)-\psi_2(\xi)| + I_{0}\max_{\xi\in[-X,X]}|\phi_1(\xi)-\phi_2(\xi)|.
\end{align*}
Since $S_X$ and $I_X$ are class of $C^1([-X,X])$, note that
\begin{align*}
&\ \left|c(S_{X,1}'(\xi)-S_{X,2}'(\xi))+(4d_1+\mu_1)(S_{X,1}(\xi)-S_{X,2}(\xi))\right|\\
\leq
&\ d_1|(\hat{\phi}_1(\xi+\sin\theta)-\hat{\phi}_2(\xi+\sin\theta))| + d_1|(\hat{\phi}_1(\xi-\cos\theta)-\hat{\phi}_2(\xi-\cos\theta))|\\
&\ +d_1|(\hat{\phi}_1(\xi+\cos\theta)-\hat{\phi}_2(\xi+\cos\theta))| +d_1|(\hat{\phi}_1(\xi-\sin\theta)-\hat{\phi}_2(\xi-\sin\theta))|\\
&\ + \beta|\phi_1(\xi)\psi_1(\xi) - \phi_2(\xi)\psi_2(\xi)|\\
\leq &\ \beta S_0  \max_{\xi\in[-X,X]}|\psi_1(\xi)-\psi_2(\xi)| + \left(4d_1+\beta I_{0 }\right)\max_{\xi\in[-X,X]}|\phi_1(\xi)-\phi_2(\xi)|.
\end{align*}
The discussion about  $I_X$ is similar. Hence, it is obvious that operator $\mathcal{P}$  is continuous. Next, we can get that $S_X'$ and $I_X'$  are bounded by equation (\ref{TruPro}).
Therefore,  operator $\mathcal{P}$ is compact and completely continuous. This is the end of the proof.
\end{proof}

By applying Schauder fixed point theorem, we obtain the following lemma.
\begin{lemma}
There exists $(S_X,I_X)\in\Gamma_X$ such that
\[
(S_X(\xi),I_X(\xi)) = \mathcal{P}(S_X,I_X)(\xi)
\]
for $\xi\in[-X,X]$.
\end{lemma}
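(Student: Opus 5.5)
The plan is to apply Schauder's fixed point theorem directly to $\mathcal{P}$ acting on $\Gamma_X$, using the two preceding lemmas as its hypotheses. Schauder's theorem, in the form we use, says that a continuous map of a nonempty, closed, bounded, convex subset of a Banach space into itself with relatively compact image has a fixed point. We take the Banach space to be $C([-X,X],\mathbb{R}^2)$ with the sup norm.

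\emph{Step 1: the set $\Gamma_X$.} We check that $\Gamma_X$ is nonempty, closed, bounded and convex.
\begin{itemize}
\item It is nonempty because $(S^-,I^-)$ restricted to $[-X,X]$ belongs to it. This uses $S^-\le S^+$ and $I^-\le I^+$, which hold since $1-M_1\mathrm{e}^{\varepsilon_1\xi}\le 1$. For the $I$-component we need the case analysis: if $\mathrm{e}^{\lambda_1\xi}\le I_0$, then $I^-\le \mathrm{e}^{\lambda_1\xi}=I^+$; if $\mathrm{e}^{\lambda_1\xi}>I_0$, then $\xi>\frac{1}{\lambda_1}\ln I_0$. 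In the second case we would like $I^-(\xi)=0$, which needs $\xi\ge\mathfrak{B}_2$. This can be arranged by taking $M_2$ large, which is compatible with Lemma \ref{LemLowI}.
\item It is bounded because $0\le\phi\le S_0$ and $0\le\psi\le I_0$.
\item It is closed and convex because all defining constraints are pointwise inequalities and pointwise equalities at $\xi=-X$, and these are preserved under uniform limits and convex combinations.
\end{itemize}

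\emph{Step 2: invariance and compactness.} The earlier lemma gives $\mathcal{P}(\Gamma_X)\subset\Gamma_X$. The boundary condition $(S_X,I_X)(-X)=(S^-,I^-)(-X)$ is built into the truncated problem (\ref{TruPro}), so it needs no separate check. The complete continuity lemma gives continuity of $\mathcal{P}$. From the variation-of-constants formula and the bounds on $H_1$ and $H_2$ over $\Gamma_X$, we get uniform bounds on $S_X$, $I_X$, $S_X'$ and $I_X'$. By Arzel\`a--Ascoli, $\mathcal{P}(\Gamma_X)$ is therefore relatively compact in $C([-X,X],\mathbb{R}^2)$.

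\emph{Step 3: conclusion.} Schauder's theorem then yields $(S_X,I_X)\in\Gamma_X$ with $\mathcal{P}(S_X,I_X)=(S_X,I_X)$ on $[-X,X]$.

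\emph{Main obstacle.} The delicate point is Step 2, namely that the invariance lemma was only sketched for the $I$-component. To be safe, I would re-verify that $I^-$ and $I^+$ are sub- and supersolutions of the second equation of (\ref{TruPro}). For this one uses the monotonicity of $H_2$: the map $\psi\mapsto\beta\phi\psi/(1+\alpha\psi)$ is increasing, and the shifted terms are increasing in $\hat\psi$. One then combines this with Lemmas \ref{IUp} and \ref{LemLowI}, treating the kink points $\mathfrak{B}_2$ and $\frac{1}{\lambda_1}\ln I_0$ through one-sided derivative inequalities. Once that is in place, the fixed point follows immediately.
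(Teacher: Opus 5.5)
Your proposal is correct and follows the paper's route. The paper obtains this lemma by applying Schauder's fixed point theorem to $\mathcal{P}$ on the nonempty, closed, bounded, convex set $\Gamma_X$, using the preceding invariance and complete-continuity lemmas as the hypotheses; your extra checks only supply details the paper treats as obvious, namely that $(S^-,I^-)\in\Gamma_X$ needs $M_2$ large enough that $I^-\le I^+$, and the Arzel\`a--Ascoli argument for relative compactness of $\mathcal{P}(\Gamma_X)$.
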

Next, the following prior estimates for $(S_X,I_X)$ are presented. Define
\[
C^{1,1}([-X,X])=\{u\in C^{1}([-X,X]) \mid \bar{u},~\bar{u}'~\mathrm{are} ~\mathrm{Lipschitz}~\mathrm{continuous} \}                                                                                                                            \]
with the norm
\[\parallel\bar{u}\parallel_{C^{1,1}([-X,X])}=~\max_{x\in[-X,X]}|\bar{~u~}|+\max_{x\in[-X,X]}|\bar{~u~}'|+\sup_{x,y\in[-X,X],x\neq y}\frac{|\bar{~u~}'(x)-\bar{~u~}'(y)|}{|x-y|}
\]

\begin{lemma}\label{L-2.9}
There exists a constant $N>0$ such that
\[\|S_X\|_{C^{1,1}( [-T, T])}\leq N~ ~\mathrm{and}~ ~\|I_X\|_{C^{1,1}( [-T, T])}\leq N
\]
for~$0<Y<X$~and~$X>\max\{-\mathfrak{B}_1,-\mathfrak{B}_2\}$.
\end{lemma}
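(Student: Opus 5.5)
We bound the three pieces of the $C^{1,1}$ norm in turn, on $[-Y,Y]\subset[-X,X]$ (the statement's $[-T,T]$ is read as $[-Y,Y]$). Each bound uses only the structure of the fixed-point equation (\ref{TruPro}) and the a priori bounds from $\Gamma_X$. The aim is a constant $N$ depending on $d_1,d_2,\mu_1,\mu_2,\Lambda,\beta,\alpha,c,S_0,I_0$ but not on $X$ or $Y$.

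\textbf{Step 1: sup bounds.} Since $(S_X,I_X)\in\Gamma_X$, we have
\[
0\le S^-\le S_X\le S_0,\qquad 0\le I^-\le I_X\le I^+\le I_0 .
\]
The extensions $\hat S_X,\hat I_X$ obey the same bounds. On $\xi>X$ they take values of $S_X,I_X$ at $X$; on $\xi<-X$ they equal $S^-,I^-$, which satisfy the same bounds. Hence $\max|S_X|+\max|I_X|\le S_0+I_0$ uniformly in $X$.

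\textbf{Step 2: derivative bounds.} Because $(S_X,I_X)$ is a fixed point, (\ref{TruPro}) becomes
\[
cS_X'=-(4d_1+\mu_1+\alpha)S_X+H_1(S_X,I_X),
\]
and the analogous identity holds for $I_X$. Every shifted term is controlled by Step 1, and the incidence term satisfies $0\le \beta\phi\psi/(1+\alpha\psi)\le \beta S_0 I_0$. Therefore
\[
c|S_X'|\le 2(4d_1+\mu_1+\alpha)S_0+4d_1S_0+\Lambda+\beta S_0I_0 ,
\]
and similarly for $I_X'$, again independent of $X$.

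\textbf{Step 3: Lipschitz bound on the derivatives, the main obstacle.} Subtract the equation at $\xi$ and at $\eta$ and divide by $c$. The linear terms and the incidence term are Lipschitz in $\xi$, with constants coming from Step 2. For the incidence term, the map $(s,i)\mapsto si/(1+\alpha i)$ has gradient bounded by $I_0+S_0$ on the invariant box. The shifted terms $\hat S_X(\xi\pm\sin\theta)$, $\hat S_X(\xi\pm\cos\theta)$ are the delicate part, because the shifts may leave $[-X,X]$ and the extension is only piecewise. I would check the three pieces separately. The extension equals $S_X$ inside, which is Lipschitz by Step 2. It equals the constant $S_X(X)$ for $\xi>X$, which is Lipschitz with constant $0$. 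It equals $S^-$ for $\xi<-X$, which is Lipschitz with constant $S_0M_1\varepsilon_1e^{\varepsilon_1\xi}\le S_0M_1\varepsilon_1 e^{-\varepsilon_1 X}\le S_0\varepsilon_1$, since $-X<\mathfrak{B}_1$. The pieces match continuously at $\pm X$ by the boundary condition $\phi(-X)=S^-(-X)$. So $\hat S_X$ is globally Lipschitz with constant $\max\{\sup|S_X'|,S_0\varepsilon_1\}$, and the same argument applies to $\hat I_X$ using $\lambda_1$ in place of $\varepsilon_1$. Combining these estimates gives
\[
|S_X'(\xi)-S_X'(\eta)|\le C|\xi-\eta|,
\]
and the same for $I_X'$, with $C$ independent of $X$ and $Y$. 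Taking $N$ to be the sum of the three bounds completes the argument.
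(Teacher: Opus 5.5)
Your proposal is correct and follows the same route as the paper: sup bounds from $\Gamma_X$, derivative bounds read off from the fixed-point equation, and a Lipschitz bound on $S_X'$ and $I_X'$ obtained by showing that every term on the right-hand side is Lipschitz in $\xi$ with constants independent of $X$. The one difference is in the shifted terms, where you observe that the extensions $\hat S_X,\hat I_X$ are continuous and piecewise Lipschitz, hence globally Lipschitz; this is a cleaner and more complete version of the paper's Cases I--IV, since it also covers the left extension by $S^-,I^-$ and the mixed cases that the paper's case split does not treat.
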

\begin{proof}
It is important to recall that $(S_X, I_X)$ is the fixed point of the operator $\mathcal{P}$,~Next
\begin{equation}\label{WaveEqu1}
\left\{\begin{array}{l}
\vspace{2mm}
\displaystyle c S'_{X}(\xi)=d_1\bar{S}_{X}(\xi+\sin\theta)+d_1\bar{S}_{X}(\xi+\cos\theta)+d_1\bar{S}_{X}(\xi-\sin\theta)+d_1\bar{S}_{X}(\xi-\cos\theta)-(4d_{1}+\mu_{1})S_{X}(\xi)\\
\hspace{1.6cm}+\Lambda - \displaystyle\frac{\beta S_{X}(\xi)I_{X}(\xi)}{1+\alpha I_{X}(\xi)} ,\\
\vspace{2mm}
c I'_{X}(\xi) =
d_2\bar{I}_{X}(\xi+\sin\theta)+d_2\bar{I}_{X}(\xi+\cos\theta)+d_2\bar{I}_{X}(\xi-\sin\theta)+d_2\bar{I}_{X}(\xi-\cos\theta)-(4d_{2}+\mu_{2})I_{X}(\xi)\\
\vspace{2mm}
\displaystyle
\hspace{1.6cm}+\displaystyle\frac{\beta S_{X}(\xi)I_{X}(\xi)}{1+\alpha I_{X}(\xi)} ,
\end{array}\right.
\end{equation}
where
\begin{equation*}
\bar{S}_{X}=\left\{
\begin{array}{ll}
\displaystyle   S_{X}(X), &\mbox{for $\xi>X$,}
\\
\displaystyle   S_{X}(\xi), &\mbox{for $ -X \leq\xi \leq X$,}
\\
\displaystyle   S^-(\xi), &\mbox{for $\xi< -X$,}\\
\end{array}\right.\ \ \
\bar{I}_{X}(\xi)=\left\{
\begin{array}{ll}
\displaystyle   {I}_{X}(X), &\mbox{for $\xi>X$,}
\\
\displaystyle  {I}_{X}(\xi), &\mbox{for $-X \leq\xi \leq X$,}
\\
\displaystyle   I^-(\xi), &\mbox{for $\xi< -X$.}\\
\end{array}\right.
\end{equation*}
It can be deduced from equation (\ref{WaveEqu}) ,~since $0 \leq S_X(\xi) \leq S_0$ and $0 \leq I_X(\xi) \leq I_{0}$ for all $\xi \in [-T, T]$, the following inequalities hold true
\[
|S_X'(\xi)| \leq N_{1}:=\frac{8d_1+\mu_1}{c}S_0+\frac{\Lambda}{c}+\frac{\beta S_0 I_{0}}{c},
\]
and
\[
|I_X'(\xi)| \leq N_{2}:= \frac{8d_2+\mu_2+\beta S_0 I_{0}}{c}.
\]
Following from the previous discussions ,  there exists a constant $N>0$ such that
\[
\|S_X\|_{C^1( [-T, T])} \leq N \text{ and } \|I_X\|_{C^1( [-T, T])} \leq N.
\]

Using the idea in \cite{ZhangWuIJB2019}, for fixed \(\theta\) ~and for any \(\xi, \rho \in  [-T, T]\), we have the following four cases

\textbf{Case I}. If $\xi+ \sin\theta>T,\ \rho+ \sin\theta>T,\ \xi+\cos\theta>T,\ \rho + \cos\theta> T$, then
\[
|\bar{S}_X(\xi + \sin\theta)+\bar{S}_X(\xi + \cos\theta)
-\bar{S}_X(\rho + \sin\theta)-\bar{S}_X(\rho + \cos\theta)|
=
|S_X(T)-S_X(T)| = 0.
\]

\textbf{Case II}. If $\xi+ \sin\theta<T,\ \rho+ \sin\theta<T,\ \xi+\cos\theta<T,\ \rho + \cos\theta<T$, then
\begin{align*}
&\ |\bar{S}_X(\xi + \sin\theta)+\bar{S}_X(\xi + \cos\theta)
-\bar{S}_X(\rho + \sin\theta)-\bar{S}_X(\rho + \cos\theta)|\\
=
&\ |{S}_X(\xi + \sin\theta)+{S}_X(\xi + \cos\theta)-{S}_X(\rho + \sin\theta)-{S}_X(\rho + \cos\theta)|.
\end{align*}

\textbf{Case III}. If $\xi + \sin\theta < T,\ \xi+\cos\theta < T,\ \rho +\sin\theta> T,\ \rho+\cos\theta > T$, then
\begin{align*}
&\ |\bar{S}_X(\xi + \sin\theta)+\bar{S}_X(\xi + \cos\theta)
-\bar{S}_X(\rho + \sin\theta)-\bar{S}_X(\rho + \cos\theta)|\\
=
&\ |{S}_X(\xi + \sin\theta)+{S}_X(\xi + \cos\theta)-2 S_X(T)|.
\end{align*}

\textbf{Case IV}. If $\xi + \sin\theta > T,\ \xi+\cos\theta > T,\ \rho +\sin\theta< T,\ \rho+\cos\theta < T$, then
\begin{align*}
&\ |\bar{S}_X(\xi + \sin\theta)+\bar{S}_X(\xi + \cos\theta)
-\bar{S}_X(\rho + \sin\theta)-\bar{S}_X(\rho + \cos\theta)|\\
=
&\ |2S_X(X)-{S}_X(\rho + \sin\theta)-{S}_X(\rho + \cos\theta)|.
\end{align*}
Overall, there exists a $N_1>0$ such that 
\[
|\bar{S}_X(\xi + \sin\theta)+\bar{S}_X(\xi + \cos\theta)
-\bar{S}_X(\rho + \sin\theta)-\bar{S}_X(\rho + \cos\theta)|\leq N_1|\xi - \rho|,
\]
for all \(\xi, \rho \in  [-T, T]\). Similarly, we have
\[
|\bar{I}_X(\xi + \sin\theta)+\bar{I}_X(\xi + \cos\theta)
-\bar{I}_X(\rho + \sin\theta)-\bar{I}_X(\rho + \cos\theta)|\leq N_2|\xi - \rho|,
\]
for all \(\xi, \rho \in  [-T, T]\). Furthermore,
\begin{align*}
 \Big|\displaystyle\frac{\beta S_{X}(\xi)I_{X}(\xi)}{1+\alpha I_{X}(\xi)} -\displaystyle\frac{\beta S_{X}(\rho)I_{X}(\rho)}{1+\alpha I_{X}(\rho)} \Big| 
&\leq {\frac{\beta}{\alpha}}|S_X(\xi)-S_X(\rho)|+\beta S_{0}|I_X(\xi)-I_X(\rho)|,
\end{align*}
for all \(\xi, \rho \in  [-T, T]\). Hence, there exists some constant \(N>0\) such that
\[
\|S_X\|_{C^{1,1}( [-T, T])} \leq N.
\]
Similarly,
\[
\|I_X\|_{C^{1,1}( [-T, T])} \leq N
\]
for any \(T < X\). This completes the proof.
\end{proof}

\subsection{Existence of Traveling Wave Solutions}
The ensuing discourse will commence with a presentation of the primary outcomes of this section.
\begin{theorem}
There exists a non-trivial travelling wave solution \((S(\xi), I(\xi))\) for any wave speed~\(c > c^*\) of the system (\ref{Model}) that satisfies
\[
S^-\leq S(\xi)\leq S^+ \text{ and } I^-\leq I(\xi)\leq I^+ \text{ in } \mathbb{R}.
\]
Also,
\[
\lim_{\xi\rightarrow -\infty}(S(\xi), I(\xi))=(S_0, 0) \text{ and } \lim_{\xi\rightarrow +\infty}(S(\xi), I(\xi))=(S^*, I^*).
\]
\end{theorem}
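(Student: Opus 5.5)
We would prove the theorem in three stages: pass to the limit $X\to\infty$ in the truncated fixed points, identify the behavior at $-\infty$ by squeezing, and identify the behavior at $+\infty$ with a Lyapunov functional. This is the standard route for SIR-type lattice waves.

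\textbf{Step 1: the limit $X\to\infty$.} Take a sequence $X_n\to+\infty$ with $X_n>\max\{-\mathfrak{B}_1,-\mathfrak{B}_2\}$, and let $(S_{X_n},I_{X_n})\in\Gamma_{X_n}$ be the fixed points obtained above. By Lemma \ref{L-2.9}, the $C^{1,1}([-T,T])$ norms are bounded by a constant $N$ that does not depend on $n$ or $T$. Arzel\`a--Ascoli and a diagonal argument then give a subsequence converging in $C^1_{loc}(\mathbb{R})$ to some $(S,I)$. For fixed $\xi$, all shifts $\xi\pm\sin\theta,\ \xi\pm\cos\theta$ eventually lie in $[-X_n,X_n]$, so the extended functions $\bar S_{X_n},\bar I_{X_n}$ coincide with $S_{X_n},I_{X_n}$ there. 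Passing to the limit in (\ref{WaveEqu1}) shows that $(S,I)$ solves (\ref{WaveEqu}) on $\mathbb{R}$. The bounds $S^-\le S\le S^+$ and $I^-\le I\le I^+$ pass to the limit pointwise.

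\textbf{Step 2: behavior at $-\infty$.} Since $S^\pm(\xi)\to S_0$ and $I^\pm(\xi)\to0$ as $\xi\to-\infty$, squeezing gives $(S,I)\to(S_0,0)$. Nontriviality holds because $I\ge I^->0$ for $\xi<\mathfrak{B}_2$.

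\textbf{Step 3: behavior at $+\infty$, in three sub-steps.}

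\emph{(a) Persistence and upper bounds.} First show $\sup I<\infty$ and that $S$ is bounded below by a positive constant; the latter uses $I\le I_0$ and the $S$-equation. Next show $\liminf_{\xi\to+\infty}I(\xi)>0$. Suppose instead that $I(\xi_k)\to0$ along some $\xi_k\to\infty$. A Harnack-type inequality for the lattice equation (integrate the $I$-equation to compare $I(\xi\pm\sin\theta)$ and $I(\xi\pm\cos\theta)$ with $I(\xi)$) gives $I\to0$ on intervals around $\xi_k$. Then $S$ tends to $\Lambda/\mu_1=S_0$ on those intervals. Since $\Re_0>1$, $I$ grows there, which is a contradiction. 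The argument parallels the 1-D case in \cite{ChenGuoHamelNon2017,ZhangWangLiuJNS2021}.

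\emph{(b) Lyapunov functional.} Let $g(x)=x-1-\ln x$ and
\[
V(\xi)=S^*g\Big(\tfrac{S}{S^*}\Big)+I^*g\Big(\tfrac{I}{I^*}\Big)+\text{correction terms}.
\]
The correction terms have the form
\[
\frac{d_1S^*}{c}\sum_{\eta\in\{\pm\sin\theta,\pm\cos\theta\}}\int_0^{\eta} g\Big(\tfrac{S(\xi-s)}{S^*}\Big)\,\d s,
\]
and similarly for $I$ with $d_2$ and $I^*$. They are chosen so that the shift terms telescope when $V$ is differentiated along (\ref{WaveEqu}). Using $\Lambda=\beta S^*I^*/(1+\alpha I^*)+\mu_1S^*$, $\mu_2=\beta S^*/(1+\alpha I^*)$, and the convexity identity $x-1-\ln x\ge0$, one should obtain $cV'(\xi)\le -D(\xi)\le0$. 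Here $D$ vanishes only at $(S^*,I^*)$, in the spirit of \cite{KorobeinikovBMB2006}. The saturation term contributes a sign-definite factor of the form $-(\cdots)(I-I^*)^2/((1+\alpha I)(1+\alpha I^*)I)$.

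\emph{(c) Conclusion.} Step (a) makes $V$ bounded below and bounded on $[0,\infty)$. Since $V$ is monotone, $\int^\infty D<\infty$. Uniform continuity (from the $C^{1,1}$ bound) then forces $D(\xi)\to0$, so $(S,I)\to(S^*,I^*)$.

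\textbf{Main obstacle.} We expect the hardest part to be the persistence estimate in (a), together with verifying that the four directional shift terms $\pm\sin\theta,\pm\cos\theta$ telescope correctly in $V'$. If $\sin\theta$ or $\cos\theta$ is negative, the integrals must be oriented carefully. If the direct persistence argument fails, we would first try to bound $I(\xi\pm\sin\theta)/I(\xi)$ and $I(\xi\pm\cos\theta)/I(\xi)$ uniformly, via $cI'\ge -(4d_2+\mu_2)I$ integrated over unit intervals, and then run the contradiction argument on translates $(S,I)(\cdot+\xi_k)$ converging to an entire solution.
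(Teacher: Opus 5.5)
Your route matches the paper's. You pass to the limit in the truncated fixed points by Arzel\`a--Ascoli, squeeze between $(S^\pm,I^\pm)$ at $-\infty$, and at $+\infty$ use a Lyapunov functional built from $g(S/S^*)$, $g(I/I^*)$ plus oriented shift integrals over $[0,\pm\sin\theta]$ and $[0,\pm\cos\theta]$. These are exactly the paper's $V_1,V_2,U_1,U_2$, and since the shift set is symmetric, oriented integrals make the signs of $\sin\theta,\cos\theta$ harmless. You go further than the paper: its Step 4 stops after computing $L'(\xi)$ and never proves $\liminf_{\xi\to+\infty}I(\xi)>0$ or extracts the limit. Your Barbalat-type conclusion (c) is correct once persistence is known. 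The genuine gap is the persistence step (a) itself, which you rightly flag and which the paper does not supply either.

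\emph{Why the sketch fails.} The argument ``$I$ is small and $S\approx S_0$ near $\xi_k$, so since $\Re_0>1$ $I$ grows there'' yields no contradiction. The limiting linear equation $c\bar I'=d_2\mathfrak{J}[\bar I]+(\beta S_0-\mu_2)\bar I$ has positive solutions such as $\mathrm{e}^{\lambda_1\xi}$. Indeed, the wave itself near $-\infty$ is exactly this configuration. The contradiction has to come from how the points are chosen and from the sign of $c$, treated in two cases.

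\emph{Case $\liminf_{\xi\to+\infty}I=0<\limsup_{\xi\to+\infty}I$.} Pick $a_k<\xi_k<b_k$ with $a_k\to+\infty$, $I(a_k),I(b_k)\ge\delta$ and $I(\xi_k)=\min_{[a_k,b_k]}I\to0$. A two-sided bound $|I'/I|\le C$ forces $\xi_k-a_k\to\infty$, $b_k-\xi_k\to\infty$, and $I(\cdot+\xi_k)\to0$ locally uniformly. Hence $S(\cdot+\xi_k)\to S_0$, the only bounded solution of $cS'=d_1\mathfrak{J}[S]+\Lambda-\mu_1S$. Moreover $I(\cdot+\xi_k)/I(\xi_k)\to\bar I$, where $\bar I\ge\bar I(0)=1$ on $\mathbb{R}$ and $\bar I$ solves the linearized equation. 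Evaluating at $0$ gives $0=d_2\mathfrak{J}[\bar I](0)+\beta S_0-\mu_2>0$, a contradiction.

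\emph{Case $I\to0$ as $\xi\to+\infty$.} Then $S\to S_0$ by the same translation-limit argument, so $cI'\ge d_2\mathfrak{J}[I]+\vartheta I$ for large $\xi$, with $\vartheta>0$. Set $E(\xi)=\int_\xi^\infty I$ and $G(\xi)=\int_\xi^\infty E$; $G$ is convex because $G''=I>0$. Integrating the inequality twice over $[\xi,\infty)$ yields $-cE(\xi)\ge d_2\mathfrak{J}[G](\xi)+\vartheta G(\xi)>0$, which is impossible since $c>0$.

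\emph{The Harnack bound.} Your fallback estimate from $cI'\ge-(4d_2+\mu_2)I$ gives only the backward bound $I(\xi-h)\le\mathrm{e}^{(4d_2+\mu_2)h/c}I(\xi)$ for $h>0$. The forward bound $I(\xi+h)\le C_hI(\xi)$ is also needed, both for $b_k-\xi_k\to\infty$ and for local boundedness of $I(\cdot+\xi_k)/I(\xi_k)$. Getting it requires exploiting the forward shift term in the $I$-equation, as in the paper's Lemma \ref{10}.
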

The proof of this theorem is divided into several steps.

\textbf{Step 1}
We show that system (\ref{Model}) admits a nontrivial traveling wave solution \((S(\xi), I(\xi))\) in \(\mathbb{R}\) and satisfying
\[
\lim_{\xi\rightarrow -\infty}(S(\xi), I(\xi))=(S_0, 0).
\]

It is demonstrated that the system (\ref{Model}) possesses a nontrivial travelling wave solution \((S(\xi), I(\xi))\) in \(\mathbb{R}\) and satisfies $\lim_{\xi\rightarrow -\infty}(S(\xi), I(\xi))=(S_0, 0).$

Suppose that \(\{X_n\}_{n = 1}^{+\infty}\) is an increasing sequence such that \(X_n > -\mathfrak{B}_2\) ~and \(X_n > Y\) , while \(X_n \to +\infty\)~ as \(n \to +\infty\)~for all \(n \in \mathbb{N}\), where~\(Y\) comes from Lemma \ref{L-2.9}. For the system of equations (\ref{TruPro}), denote \((S_n, I_n) \in \Gamma_{X_n}\) as a solution to this system of equations. Since the function \(I^{+}(\xi)\) is bounded in \([-X_N, X_N]\), for  \(\forall N \in \mathbb{N}\), the sequence
\[
\{S_n\}_{n \geq N} \text{ and } \{I_n\}_{n \geq N}
\]
are  uniformly bounded in \([-X_N, X_N]\). Next, by deduction of (\ref{TruPro}), this can be obtained
\[
\{S_n'\}_{n \geq N} \text{ and } \{I_n'\}_{n \geq N}
\]
are also uniformly bounded in \([-X_N, X_N]\). Using equation (\ref{TruPro}) again, we can denote \(S_n''(\xi)\) and \(I_n''(\xi)\) using \(S_n(\xi)\), \(I_n(\xi)\), \(S_n(\xi \pm \sin\theta)\), \(I_n(\xi \pm \cos\theta)\), \(S_n(\xi \pm 2\sin\theta)\) and \(I_n(\xi \pm 2\cos\theta)\), which give us
\[
\{S_n''\}_{n \geq N} \text{ and } \{I_n''\}_{n \geq N}
\]
are uniformly bounded in \([-X_N + 2\sin\theta, X_N - 2\sin\theta]\) and \([-X_N + 2\cos\theta, X_N - 2\cos\theta]\). By using the Arzela-Ascoli theorem (see Rudin 1991, Theorem A5), in order to extract the subsequence we can use a diagonal process, denoted \(\{S_{n_k}\}_{k \in \mathbb{N}}\) and \(\{I_{n_k}\}_{k \in \mathbb{N}}\) such that
\[
S_{n_k} \to S,\ I_{n_k} \to I,\ S_{n_k}' \to S' \text{ and } I_{n_k}' \to I' \text{ as } k \to +\infty
\]
For certain functions \(S\) and \(I\) in \(C^{1}(\mathbb{R})\), this holds consistently on any tight subinterval of \(\mathbb{R}\). Therefore, the solution of the system (\ref{WaveEqu})  \(\text{ in } \mathbb{R}\) is  \((S(\xi), I(\xi))\)
\[
S^{-}(\xi) \leq S(\xi) \leq S^{+}(\xi) \text{ and } I^{-}(\xi) \leq I(\xi) \leq I^{+}(\xi) 
\]
Moreover, according to the definition of equation (\ref{TruPro}), it follows:
\[
\lim_{\xi \to -\infty}(S(\xi), I(\xi)) = (S_0, 0).
\]

\textbf{Step 2}
 ~~In this step, we prove that the functions \(S(\xi)\)  and \(I(\xi)\) in \(\mathbb{R}\) satisfy \(0 < S(\xi) < S_0\) and \(I(\xi) > 0\).
We will first prove that \(I(\xi) > 0\) for  \(\forall ~\xi \in \mathbb{R}\). Suppose there exists a real number \(\xi_0\) such that \(I(\xi_0) = 0\) and \(\theta\neq k \pi\),  is required with \(\xi_<\xi_{0}\).
 Through the second equation of  (\ref{WaveEqu}), we get
\[
0 = d_1\mathfrak{J}[I](\xi_0) =I(\xi_0 + \sin\theta) + I(\xi_0 - \sin\theta)+ I(\xi_0 + \cos\theta)+I(\xi_0 - \cos\theta).
\]
As a result,  \(I(\xi_1 \pm \sin\theta) = I(\xi_1 \pm\cos\theta) = 0\) since \(I(\xi)\geq0\) in \(\mathbb{R}\). This contradicts our definition of \(\xi_0\).


Next, for  \(\theta\) above, suppose there exists a real number  \(S(\xi_{1}) = 0\) such that \(S(\xi_1) = 0\), then \(S'(\xi_1) = 0\) and \(\mathfrak{J}[S](\xi_1) \geq 0\). From the first equation of (\ref{WaveEqu}) , we have
\[
0 = d_1\mathfrak{J}[I](\xi_1) +\Lambda > 0 ,
\]
which is a contradiction. Consequently, \(S(\xi) > 0\) for all \(\xi \in \mathbb{R}\).

To show that \(S(\xi)<S_0\) for all \(\xi\in\mathbb{R}\), we assume that if there exists \(\xi_2\) such that \(S(\xi_2) = S_0\), it is easy to obtain
\[
0 = d_1\mathfrak{J}[S](\xi_2)-\frac{\beta S(\xi_2)I(\xi_2)}{1+\alpha I(\xi_2)}<0.
\]
This contradiction leads to \(S(\xi)<S_0\) for all \(\xi\in\mathbb{R}\).

\textbf{Step 3 }Boundedness of traveling wave solutions \(S(\xi)\) and \(I(\xi)\) in \(\mathbb{R}\).

We are able to easily obtain \(0<S(\xi)<S_{0}\) and \(0<I(\xi)<I_{0}\), where $I_0= \displaystyle\frac{\beta S_0 - \mu_{2}}{\alpha \mu_{2}}$. Therefore, the proof of the above step is simple. So we just give the conclusion ignoring the details of the proof.

In order for the Lyapunov function we construct to be meaningful we need the following lemma.
\begin{lemma}\label{10}
The functions \(\displaystyle\frac{I(\xi\pm\sin\theta)}{I(\xi)}\), \(\displaystyle\frac{I(\xi\pm\cos\theta)}{I(\xi)}\) and \(\displaystyle\frac{I'(\xi)}{I(\xi)}\) are bounded in \(\mathbb{R}\).
\end{lemma}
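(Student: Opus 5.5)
Write $a:=\frac{4d_2+\mu_2}{c}$. The plan is the standard integrating-factor argument for lattice traveling waves, as in \cite{ChenGuoHamelNon2017,ZhangWangLiuJNS2021}, adapted to the four shifts $\pm\sin\theta$, $\pm\cos\theta$. We use that $I>0$ on $\mathbb{R}$ (Step 2) and that $0<S<S_0$ and $0<I<I_0$ (Step 3).

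\textbf{Step A: ratios at backward shifts.} Rewrite the second equation of (\ref{WaveEqu}) as
\[
\big(\mathrm{e}^{a\xi}I(\xi)\big)' = \frac{\mathrm{e}^{a\xi}}{c}\Big[d_2\big(I(\xi+\sin\theta)+I(\xi-\sin\theta)+I(\xi+\cos\theta)+I(\xi-\cos\theta)\big)+\frac{\beta S I}{1+\alpha I}\Big]\geq 0 .
\]
Hence $\mathrm{e}^{a\xi}I(\xi)$ is nondecreasing. For any $h\in\{|\sin\theta|,|\cos\theta|\}$ with $h>0$ this gives $I(\xi-h)\leq \mathrm{e}^{ah}I(\xi)$. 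So $I(\xi-|\sin\theta|)/I(\xi)$ and $I(\xi-|\cos\theta|)/I(\xi)$ are bounded by $\mathrm{e}^{a}$. If $h=0$ the ratio equals $1$.

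\textbf{Step B: ratios at forward shifts.} This is the main obstacle, since monotonicity alone gives no upper bound for $I(\xi+h)/I(\xi)$. Integrate the same identity over $[\xi,\xi+h]$ and keep only the term $d_2 I(\tau+h)\geq 0$ on the right, with $\tau$ the integration variable. Since $\mathrm{e}^{a(\tau-\xi)}\geq 1$ there, this yields
\[
\mathrm{e}^{ah}I(\xi+h)-I(\xi)\geq \frac{d_2}{c}\int_{\xi}^{\xi+h}I(\tau+h)\,\d\tau=\frac{d_2}{c}\int_{\xi+h}^{\xi+2h}I(s)\,\d s .
\]
On $[\xi+h,\xi+2h]$ the monotonicity from Step A gives $I(s)\geq \mathrm{e}^{-a(s-\xi-h)}I(\xi+h)\geq \mathrm{e}^{-ah}I(\xi+h)$. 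Therefore
\[
\mathrm{e}^{ah}I(\xi)\geq \mathrm{e}^{ah}I(\xi+h)-\ldots
\]
is not yet the right form, so we redo it with the sign arranged correctly. Apply the integral identity on $[\xi-h,\xi]$ instead, which gives
\[
\mathrm{e}^{ah}I(\xi)\geq I(\xi)-\ldots\geq \frac{d_2}{c}\int_{\xi-h}^{\xi}I(\tau+h)\,\d\tau\geq \frac{d_2h}{c}\,\mathrm{e}^{-ah}\,I(\xi+h)\cdot\text{const}.
\]
The last inequality uses Step A to compare $I(\tau+h)$, for $\tau+h\in[\xi,\xi+h]$, with $I(\xi+h)$. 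This produces $I(\xi+h)\leq C\,I(\xi)$ with $C=c\,\mathrm{e}^{2ah}/(d_2h)$.

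The fallback, if the constant bookkeeping here is delicate, is to split the integration into the four directions. One picks whichever of $|\sin\theta|$ and $|\cos\theta|$ is positive. When $\theta$ is a multiple of $\pi/2$, one shift is $0$ and is trivial.

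\textbf{Step C: the ratio $I'/I$.} Divide the second equation of (\ref{WaveEqu}) by $I(\xi)$:
\[
c\frac{I'}{I}=d_2\sum\frac{I(\xi\pm\sin\theta)}{I(\xi)}+d_2\sum\frac{I(\xi\pm\cos\theta)}{I(\xi)}-4d_2+\frac{\beta S}{1+\alpha I}-\mu_2 .
\]
All shift ratios are bounded by Steps A and B, and $0<\frac{\beta S}{1+\alpha I}\leq\beta S_0$. Hence $I'/I$ is bounded in $\mathbb{R}$, which completes the plan for Lemma \ref{10}.
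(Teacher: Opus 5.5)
Your Steps A and C match the paper's argument, and using $|\sin\theta|,|\cos\theta|$ handles the sign cases that the paper leaves to the reader. However, Step B has a genuine gap, and you yourself flag it as the main obstacle.

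Integrating $\big(\mathrm{e}^{a\tau}I(\tau)\big)'\geq \frac{d_2}{c}\mathrm{e}^{a\tau}I(\tau+h)$ over $[\xi-h,\xi]$ correctly gives $\mathrm{e}^{a\xi}I(\xi)\geq \frac{d_2}{c}\int_{\xi-h}^{\xi}\mathrm{e}^{a\tau}I(\tau+h)\,\d\tau$. But your last inequality, which bounds this integral from below by a multiple of $I(\xi+h)$, does not follow from Step A. Monotonicity of $\mathrm{e}^{at}I(t)$ only gives $I(t)\geq \mathrm{e}^{-a(t-s)}I(s)$ for $t\geq s$, that is, lower bounds at later points in terms of earlier ones. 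Every point $\tau+h\in[\xi,\xi+h]$ lies to the left of $\xi+h$, so Step A yields only the upper bound $I(\tau+h)\leq \mathrm{e}^{a(\xi-\tau)}I(\xi+h)$. The best lower bound available is in terms of $I(\xi)$, and it says nothing about $I(\xi+h)$. A lower bound of $I(\tau+h)$ by $I(\xi+h)$ is exactly the forward-ratio estimate you are trying to prove. So the step is circular, and the constant $C=c\,\mathrm{e}^{2ah}/(d_2h)$ is unjustified. Your fallback of splitting into the four directions does not touch this issue.

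The missing idea, which the paper uses (following Chen--Guo--Hamel), is a half-step argument. Integrate over $[\xi-\frac h2,\xi]$ instead. Then $\tau+h\in[\xi+\frac h2,\xi+h]$ lies entirely to the right of $\xi+\frac h2$, so monotonicity gives $\mathrm{e}^{a\tau}I(\tau+h)\geq \mathrm{e}^{-ah}\mathrm{e}^{a(\xi+h/2)}I(\xi+\frac h2)$. Hence
\[
I\big(\xi+\tfrac h2\big)\leq \frac{2c}{d_2h}\,\mathrm{e}^{ah/2}\,I(\xi)\quad\text{for all }\xi\in\mathbb{R}.
\]
Applying this at $\xi$ and at $\xi+\frac h2$ and multiplying gives $I(\xi+h)\leq \big(\frac{2c}{d_2h}\big)^2\mathrm{e}^{ah}I(\xi)$. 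Use this as Step B for each positive $h\in\{|\sin\theta|,|\cos\theta|\}$; your Steps A and C then complete the proof.
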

\begin{proof}
Due to the uncertainty of the positive and negative values of  \(\sin\theta\) and \(\cos\theta\), we will categorize them into four situations. We assume  \(\theta\neq \displaystyle\frac{\pi}{2}+2k\pi~~(k\in  Z)\), firstly \(\sin\theta>0~,~\cos\theta>0\), secondly \(\sin\theta>0~,~\cos\theta<0\), thirdly \(\sin\theta<0~,~\cos\theta<0\), and finally \(\sin\theta<0~,~\cos\theta>0\). The proof ideas for the above four situations are the same. We will use the first situation as an example to demonstrate, and leave the remaining three situations for readers to prove themselves.

 From the second equation in the traveling wave system (\ref{WaveEqu}), we get
\[
cI'(\xi)+(4d_2+\mu_2)I(\xi)=d_2I(\xi + \sin\theta)+d_2I(\xi -\sin\theta)+d_2I(\xi +\cos\theta)+d_2I(\xi -\cos\theta)+
\frac{\beta S(\xi)I(\xi)}{1+\alpha I(\xi)}>0.
\]
Denote \(W(\xi)=e^{\nu\xi}I(\xi)\), where \(\nu=(4d_2+\mu_2)/c\). As a result
\[
cW'(\xi)=e^{\nu\xi}(cI'(\xi)+(4d_2+\mu_2)I(\xi))>0,
\]
thus \(W(\xi)\) is increasing on \(\xi\). Then, \(W(\xi - \sin\theta)<W(\xi)\), this means,
\[
\frac{I(\xi - \sin\theta)}{I(\xi)}<e^{\nu} ~~~ \forall ~ \xi\in\mathbb{R}.
\]
Note that
\begin{equation}\label{3.1}
\begin{aligned}
\left[e^{\nu\xi}I(\xi)\right]' &= \frac{1}{c}e^{\nu\xi}\left[d_2I(\xi + \sin\theta)+d_2I(\xi -\sin\theta)+d_2I(\xi +\cos\theta)+d_2I(\xi -\cos\theta)+
\frac{\beta S(\xi)I(\xi)}{1+\alpha I(\xi)}\right]\\
&>\frac{d_2}{c}e^{\nu\xi}I(\xi + \sin\theta).
\end{aligned}
\end{equation}
Integrating \ref{3.1} on \([\xi, \xi + \sin\theta]\) and utilizing the fact that \(e^{\nu\xi}I(\xi)\) increases, we obtain
\begin{align*}
e^{\nu(\xi + \sin\theta)}I(\xi +  \sin\theta)&>e^{\nu\xi}I(\xi)+\frac{d_2}{c}\int_{\xi}^{\xi +  \sin\theta}e^{\nu s}I(s +  \sin\theta)ds\\
&>e^{\nu\xi}I(\xi)+\frac{d_2}{c}\int_{\xi}^{\xi +  \sin\theta}e^{\nu(\xi +  \sin\theta)}I(\xi +  \sin\theta)e^{-\nu\sin\theta}ds\\
&=e^{\nu\xi}\left[I(\xi)+\frac{d_2}{c}I(\xi +  \sin\theta) \sin\theta\right].
\end{align*}
By (\ref{3.1}), we have
\begin{equation}\label{3.2}
\begin{aligned}
\left[e^{\nu\xi}I(\xi)\right]'&>\left(\frac{d_2}{c}\right)^2e^{-2\nu\sin\theta}e^{\nu(\xi + \sin\theta)}I(\xi + \sin\theta)\sin\theta.
\end{aligned}
\end{equation}
Integrating (\ref{3.2}) over \(\left[\xi - \displaystyle \frac{\sin\theta}{2}, \xi\right]\) yields
\begin{align*}
e^{\nu\xi}I(\xi)&>\left(\displaystyle \frac{d_2}{c}\right)^2e^{-2\nu\sin\theta}\sin\theta\int_{\xi - \frac{\sin\theta}{2}}^{\xi}e^{\nu(s + \sin\theta)}I(s + \sin\theta)ds\\
 &>\left(\frac{d_2}{c}\right)^2\frac{e^{-2\nu\sin\theta}\sin^{2}\theta }{2}e^{\nu(\xi + \frac{\sin\theta}{2})}I\left(\xi + \frac{\sin\theta}{2}\right),
\end{align*}
that is
\[
\frac{I\left(\xi + \frac{\sin\theta}{2}\right)}{I(\xi)}<2\left(\frac{c}{d_2}\right)^2e^{\frac{3}{2}\nu\sin\theta}\sin^{2}\theta ~~\text{ for all } \xi \in \mathbb{R}.
\]
Similarly, integrating (\ref{3.2}) over \(\left[\xi, \xi + \frac{\sin\theta}{2}\right]\), we have
\[
\frac{I(\xi + 1)}{I\left(\xi + \frac{\sin\theta}{2}\right)}<2\left(\frac{c}{d_2}\right)^2e^{\frac{3}{2}\nu\sin\theta}\sin^{2}\theta~~ \text{ for all } \xi \in \mathbb{R}.
\]
Thus,
\[
\frac{I(\xi + \sin\theta)}{I(\xi)}=\frac{I\left(\xi + \frac{\sin\theta}{2}\right)}{I(\xi)}\frac{I(\xi + \sin\theta)}{I\left(\xi + \frac{\sin\theta}{2}\right)}<4\left(\frac{c}{d_2}\right)^4e^{3\nu\sin\theta}\sin^{4}\theta~~ \text{ for all } \xi \in \mathbb{R}.
\]
Similarly, the proof for $\displaystyle\frac{I(\xi \pm \cos\theta)}{I(\xi)}$ is similar.

By the second equation of (\ref{WaveEqu}), it follows that
\begin{align*}
cI'(\xi)&=d_{2}I(\xi + \sin\theta)+d_{2}I(\xi - \sin\theta)+d_{2}I(\xi + \cos\theta)+d_{2}I(\xi - \cos\theta)+\frac{\beta S(\xi){I(\xi)}}{1+\alpha I(\xi)}-(4d_2+\mu_2)I(\xi)\\
&\leq\d_{2}\frac{I(\xi + \sin\theta)}{I(\xi)}+d_{2}\frac{I(\xi - \sin\theta)}{I(\xi)}+d_{2}\frac{I(\xi + \cos\theta)}{I(\xi)}+d_{2}\frac{I(\xi - \cos\theta)}{I(\xi)}+\beta S(\xi)-(4d_2+\mu_2)\\
&\leq\d_{2}\frac{I(\xi + \sin\theta)}{I(\xi)}+d_{2}\frac{I(\xi - \sin\theta)}{I(\xi)}+d_{2}\frac{I(\xi + \cos\theta)}{I(\xi)}+d_{2}\frac{I(\xi - \cos\theta)}{I(\xi)}+\beta S_{0}-(4d_2+\mu_2),
\end{align*}
which gives us \(\displaystyle\frac{I'(\xi)}{I(\xi)}\) is bounded in \(\mathbb{R}\). This proof is over.
\end{proof}

Step 4
\begin{proof}
Define the following Lyapunov functional
\[
L(\xi) = W(\xi) + d_1 S^*(V_1(\xi)+V_2(\xi)) + d_2 I^* (U_1(\xi)+U_2(\xi)),
\]
where
\[
\left\{
\begin{array}{l}
\vspace{1mm}
\displaystyle   W(\xi) = cS^* g\left(\frac{S(\xi)}{S^*}\right) + cI^* g\left(\frac{I(\xi)}{I^*}\right),\\
\vspace{1mm}
\displaystyle   V_1(\xi) = \int_0^{\sin\theta} g\left(\frac{S(\xi-\varsigma)}{S^*}\right)\d \varsigma - \int_{-\sin\theta}^0 g\left(\frac{S(\xi-\varsigma)}{S^*}\right)\d \varsigma,\\
\vspace{1mm}
\displaystyle   V_2(\xi) = \int_0^{\cos\theta} g\left(\frac{S(\xi-\varsigma)}{S^*}\right)\d \varsigma - \int_{-\cos\theta}^0 g\left(\frac{S(\xi-\varsigma)}{S^*}\right)\d \varsigma,\\
\vspace{1mm}
\displaystyle   U_1(\xi) = \int_0^{\sin\theta} g\left(\frac{I(\xi-\varsigma)}{I^*}\right)\d \varsigma - \int_{-\sin\theta}^0 g\left(\frac{I(\xi-\varsigma)}{I^*}\right)\d \varsigma,\\
\vspace{1mm}
\displaystyle   U_2(\xi) = \int_0^{\cos\theta} g\left(\frac{I(\xi-\varsigma)}{I^*}\right)\d \varsigma - \int_{-\cos\theta}^0 g\left(\frac{I(\xi-\varsigma)}{I^*}\right)\d \varsigma.
\end{array}\right.
\]

\begin{align*}
\frac{\d W(\xi)}{\d \xi} =& c \left(1-\frac{S^*}{S(\xi)}\right)\frac{\d S(\xi)}{\d \xi} + c \left(1-\frac{I^*}{I(\xi)}\right)\frac{\d I(\xi)}{\d \xi}\\
=& \left(1-\frac{S^*}{S(\xi)}\right)\left(d_1 \mathfrak{D}[S](\xi) + \Lambda - \frac{\beta S(\xi)I(\xi)}{1+\alpha I(\xi)} - \mu_1 S(\xi)\right)\\
& + \left(1-\frac{I^*}{I(\xi)}\right)\left(d_2 \mathfrak{D}[I](\xi) + \frac{\beta S(\xi)I(\xi)}{1+\alpha I(\xi)} - \mu_2 I(\xi)\right)\\
=& d_1 \left(1-\frac{S^*}{S(\xi)}\right) \mathfrak{D}[S](\xi) + d_2 \left(1-\frac{I^*}{I(\xi)}\right) \mathfrak{D}[I](\xi) + \mathfrak{A}(\xi),
\end{align*}
where
\[
\mathfrak{A}(\xi) = \left(1-\frac{S^*}{S(\xi)}\right)\left(\Lambda - \frac{\beta S(\xi)I(\xi)}{1+\alpha I(\xi)} - \mu_1 S(\xi)\right) + \left(1-\frac{I^*}{I(\xi)}\right)\left(\frac{\beta S(\xi)I(\xi)}{1+\alpha I(\xi)} - \mu_2 I(\xi)\right).
\]
Using the fact that
\[
\Lambda = \frac{\beta S^*I^*}{1+\alpha I^*} - \mu_1 S^*\ \ \ \textrm{and}\ \ \ \frac{\beta S^*I^*}{1+\alpha I^*} = \mu_2 I^*,
\]
we obtain that
\begin{align*}
\mathfrak{A}(\xi) = \frac{\beta S^*I^*}{1+\alpha I^*}\left(3 - \frac{S^*}{S(\xi)} - \frac{S(\xi)(1+\alpha I^*)}{S^* (1+\alpha I(\xi))} - \frac{1+\alpha I(\xi)}{1+\alpha I^*}\right) - \frac{\alpha \beta S^* (I(\xi)-I^*)^2}{(1+\alpha I(\xi))(1+\alpha I^*)^2}.
\end{align*}
Now, we deal other parts of the Lyapunov functional, firstly, one has that
\begin{align*}
\frac{\d V_1(\xi)}{\d \xi} = &\ \frac{\d}{\d \xi} \left[\int_0^{\sin\theta} g\left(\frac{S(\xi-\varsigma)}{S^*}\right)\d \varsigma - \int_{-\sin\theta}^0 g\left(\frac{S(\xi-\varsigma)}{S^*}\right)\d \varsigma\right]\\
= &\  \int_0^{\sin\theta} \frac{\d}{\d \xi}g\left(\frac{S(\xi-\varsigma)}{S^*}\right)\d \varsigma - \int_{-\sin\theta}^0 \frac{\d}{\d \xi}g\left(\frac{S(\xi-\varsigma)}{S^*}\right)\d \varsigma\\
= &\  -\int_0^{\sin\theta} \frac{\d}{\d \varsigma}g\left(\frac{S(\xi-\varsigma)}{S^*}\right)\d \varsigma + \int_{-\sin\theta}^0 \frac{\d}{\d \varsigma}g\left(\frac{S(\xi-\varsigma)}{S^*}\right)\d \varsigma\\
= &\ 2 g\left(\frac{S(\xi)}{S^*}\right) - g\left(\frac{S(\xi+\sin\theta)}{S^*}\right) - g\left(\frac{S(\xi-\sin\theta)}{S^*}\right).
\end{align*}
Similarly, we have
\[
\frac{\d V_2(\xi)}{\d \xi} = 2 g\left(\frac{S(\xi)}{S^*}\right) - g\left(\frac{S(\xi+\cos\theta)}{S^*}\right) - g\left(\frac{S(\xi-\cos\theta)}{S^*}\right),
\]
\[
\frac{\d U_1(\xi)}{\d \xi} = 2 g\left(\frac{I(\xi)}{I^*}\right) - g\left(\frac{I(\xi+\sin\theta)}{I^*}\right) - g\left(\frac{I(\xi-\sin\theta)}{I^*}\right),
\]
and
\[
\frac{\d U_2(\xi)}{\d \xi} = 2 g\left(\frac{I(\xi)}{I^*}\right) - g\left(\frac{I(\xi+\cos\theta)}{I^*}\right) - g\left(\frac{I(\xi-\cos\theta)}{I^*}\right).
\]
With the help of zero trick $\ln \frac{a}{b} + \ln \frac{b}{a} = 0$ for positive $a$ and $b$, we obtained the following equations through calculation,
\begin{align*}
&d_1 \left(1-\frac{S^*}{S(\xi)}\right) \mathfrak{D}[S](\xi) + d_1 S^* \frac{\d V_1(\xi)}{\d \xi}\\
= & -g\left(\frac{S(\xi-\sin\theta)}{S(\xi)}\right) -g\left(\frac{S(\xi+\sin\theta)}{S(\xi)}\right) -g\left(\frac{S(\xi-\cos\theta)}{S(\xi)}\right) -g\left(\frac{S(\xi+\cos\theta)}{S(\xi)}\right),
\end{align*}
and
\begin{align*}
&d_2 \left(1-\frac{I^*}{I(\xi)}\right) \mathfrak{D}[I](\xi) + d_2 I^* \frac{\d V_2(\xi)}{\d \xi}\\
= & -g\left(\frac{I(\xi-\sin\theta)}{I(\xi)}\right) -g\left(\frac{I(\xi+\sin\theta)}{I(\xi)}\right) -g\left(\frac{I(\xi-\cos\theta)}{I(\xi)}\right) -g\left(\frac{I(\xi+\cos\theta)}{I(\xi)}\right).
\end{align*}
Overall, we compute the derivation of Lyapunov function $L(\xi)$ along system (\ref{WaveEqu}) as follows
\begin{align*}
\frac{\d L(\xi)}{\d \xi} = &\frac{\beta S^*I^*}{1+\alpha I^*}\left(3 - \frac{S^*}{S(\xi)} - \frac{S(\xi)(1+\alpha I^*)}{S^* (1+\alpha I(\xi))} - \frac{1+\alpha I(\xi)}{1+\alpha I^*}\right) - \frac{\alpha \beta S^* (I(\xi)-I^*)^2}{(1+\alpha I(\xi))(1+\alpha I^*)^2}\\
&-g\left(\frac{S(\xi-\sin\theta)}{S(\xi)}\right) -g\left(\frac{S(\xi+\sin\theta)}{S(\xi)}\right) -g\left(\frac{S(\xi-\cos\theta)}{S(\xi)}\right) -g\left(\frac{S(\xi+\cos\theta)}{S(\xi)}\right)\\
&-g\left(\frac{I(\xi-\sin\theta)}{I(\xi)}\right) -g\left(\frac{I(\xi+\sin\theta)}{I(\xi)}\right) -g\left(\frac{I(\xi-\cos\theta)}{I(\xi)}\right) -g\left(\frac{I(\xi+\cos\theta)}{I(\xi)}\right).
\end{align*}
\end{proof}

\section{Nonexistence of Traveling Wave Solutions}
In this section, we devote ourselves to the nonexistence of traveling wave solutions. First, we show that there exists a nontrivial positive solution \((S(\xi), I(\xi))\) satisfying the asymptotic boundary conditions  (\ref{bj1}) and  (\ref{bj2})  for the system of equations (\ref{WaveEqu}), when \(c > 0\).
\begin{lemma}\label{lemma5.1}
Assume that \(\Re_0>1\) and there exists a nontrivial solution \((S(\xi), I(\xi))\) of system (\ref{Model}) satisfying the asymptotic boundary conditions (\ref{bj1}) and  (\ref{bj2}). Then, \(c > 0\), where \(c\) is defined in (\ref{dy2.1}).
\end{lemma}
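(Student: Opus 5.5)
The plan is to integrate the $S$-equation of \eqref{WaveEqu} over all of $\mathbb{R}$. The conditions \eqref{bj1}--\eqref{bj2} make every boundary term computable, and the result should force $c>0$ through the sign of $S_0-S^*$. We must also rule out $c\le 0$ directly: $c=0$ and $c<0$ are treated together, since neither uses any ODE structure beyond integration.

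\textbf{Step 1: integrate the $S$-equation.} Fix $A>0$ and integrate the first equation of \eqref{WaveEqu} over $[-A,A]$. The left side gives $c\,(S(A)-S(-A))$. For the lattice term, write each difference such as $\int_{-A}^{A}[S(\xi+\sin\theta)-S(\xi)]\,\d\xi$ as $\int_{A}^{A+\sin\theta}S-\int_{-A}^{-A+\sin\theta}S$. As $A\to\infty$ this tends to $\sin\theta\,(S^*-S_0)$, and the paired term with $-\sin\theta$ tends to $-\sin\theta\,(S^*-S_0)$. Hence $\int_{-A}^{A}\mathfrak{J}[S]\,\d\xi\to 0$, and the same holds for the $\cos\theta$ terms.

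\textbf{Step 2: handle the reaction term.} Since $\Lambda=\mu_1S_0$, the reaction term equals
\[
\mu_1(S_0-S(\xi))-\frac{\beta S(\xi)I(\xi)}{1+\alpha I(\xi)}.
\]
At $+\infty$ this tends to $\mu_1(S_0-S^*)-\mu_2I^*$, and using the endemic equilibrium relation this limit is $0$. So the reaction term need not be integrable a priori. I will therefore combine the $S$- and $I$-equations: add them and integrate. The lattice terms again vanish in the limit, and the reaction part becomes $\mu_1(S_0-S)-\mu_2 I$.

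\textbf{Step 3: use the $I$-equation.} Integrating the $I$-equation alone gives
\[
c\,I^*=\lim_{A\to\infty}\int_{-A}^{A}\Big(\frac{\beta S I}{1+\alpha I}-\mu_2 I\Big)\d\xi .
\]
This integral need not converge either, so I switch to a cleaner route. Suppose $c\le 0$. Near $-\infty$ we have $S\approx S_0$ and $I\approx 0$, with $\beta S_0-\mu_2>0$ because $\Re_0>1$. Integrate the $I$-equation over $(-\infty,\xi]$; this is legitimate once $I\in L^1(-\infty,\xi]$. For $\xi$ very negative the lattice contributions are boundary integrals of $I$ over windows of length $|\sin\theta|$ or $|\cos\theta|$ near $\xi$. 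We then get
\[
c\,I(\xi)=d_2\,(\text{boundary terms})+\int_{-\infty}^{\xi}\Big(\frac{\beta SI}{1+\alpha I}-\mu_2 I\Big)\ge d_2(\dots)+\tfrac{\beta S_0-\mu_2}{2}\int_{-\infty}^{\xi}I .
\]
Integrating once more in $\xi$ turns the boundary terms into quantities bounded by $C\,I$-integrals, through a Fubini step on the window integrals. This yields an inequality that is impossible for $c\le0$ since $I>0$. Integrability near $-\infty$ is established first: positivity of $\frac{\beta SI}{1+\alpha I}-\mu_2 I\ge \delta I$ on $(-\infty,\xi_0]$, combined with the doubly-integrated identity, gives $\int_{-\infty}^{\xi_0}\!\int_{-\infty}^{\eta} I<\infty$.

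\textbf{Main obstacle.} The hardest step is controlling the nonlocal boundary terms. After double integration they look like
\[
d_2\Big(\int_{\xi}^{\xi+\sin\theta}\!\!\int_{-\infty}^{s}I-\dots\Big),
\]
i.e. differences of the primitive $\Phi(\xi)=\int_{-\infty}^{\xi}\!\int_{-\infty}^{\eta}I$ at shifted points. I would bound these by $C\,\Phi(\xi+1)$ and use the monotonicity of $\Phi$. I would also use the Harnack-type ratio bounds from Lemma \ref{10} (valid for $c>0$; for $c\le0$ an analogous argument uses $e^{\nu\xi}$ with reversed direction) to compare $\Phi(\xi+1)$ with $\Phi(\xi)$. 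Then $\delta\,\Phi(\xi)\le c\int_{-\infty}^{\xi}I+C\,\Phi(\xi)\cdot o(1)$. For $c\le0$ the first term on the right is nonpositive, which forces $\Phi\equiv0$ near $-\infty$, contradicting $I>0$. Hence $c>0$.
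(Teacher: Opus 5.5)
Your working route (assume $c\le0$, use $\Re_0>1$ to get $cI'\ge d_2\mathfrak{J}[I]+\vartheta I$ with $\vartheta=\frac{\beta S_0-\mu_2}{2}>0$ on some $(-\infty,\xi^*)$, then integrate twice from $-\infty$) is the paper's route. Steps 1--2 are dead ends and can be dropped. The gap is in how you treat the lattice terms after the second integration. With $E(\xi)=\int_{-\infty}^{\xi}I$ and $\Phi(\xi)=\int_{-\infty}^{\xi}E$ you arrive at
\[
cE(\xi)\ \ge\ d_2\sum_{h\in\{\sin\theta,\cos\theta\}}\bigl[\Phi(\xi+h)+\Phi(\xi-h)-2\Phi(\xi)\bigr]+\vartheta\,\Phi(\xi),\qquad \xi<\xi^*.
\]
You propose to bound the brackets in absolute value by $C\Phi(\xi+1)$, and then by $C\Phi(\xi)\cdot o(1)$ via a Harnack comparison. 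Nothing produces the $o(1)$. If $I$ behaves like $e^{\lambda\xi}$ at $-\infty$, each bracket equals $(2\cosh(\lambda h)-2)\Phi(\xi)$, a fixed multiple of $\Phi(\xi)$. So the best you can get is $\vartheta\Phi\le cE+C'\Phi$, which contradicts nothing unless $C'<\vartheta$. Moreover, Lemma~\ref{10} is proved only for $c>0$, since the weight $\nu=(4d_2+\mu_2)/c$ divides by $c$. For $c=0$ the profile equation has no derivative term, so there is no ``reversed'' version of that argument to invoke.

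The missing observation is that no bound is needed, because the sign is favourable. Since $I>0$, $\Phi'=E$ is increasing, so $\Phi$ is convex. Hence each bracket $\Phi(\xi+h)+\Phi(\xi-h)-2\Phi(\xi)=\int_{\xi}^{\xi+h}E-\int_{\xi-h}^{\xi}E$ is nonnegative, for either sign of $h$. Discarding these terms gives $0\ge cE(\xi)\ge\vartheta\Phi(\xi)>0$ when $c\le0$, which is exactly the paper's contradiction.

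Separately, your integrability argument is circular as written: the doubly integrated identity already presumes $I,E\in L^1$ near $-\infty$. To repair it, integrate over $[A,\xi]$, bound the window terms by $2|h|\sup I$ (then by $2|h|\sup E$ at the second stage), and let $A\to-\infty$.
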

\begin{proof}
Next, we will use the condition \(\Re_0 > 1\). Suppose that \(c\leq0\). Due to \(S(\xi)\to S_0\) and \(I(\xi)\to0\) as \(\xi\to-\infty\), then exists a \(\xi^* < 0\) such that
\begin{align}\label{5.1}
cI'(\xi)&\geq d_2[I(\xi + \sin\theta)+I(\xi - \sin\theta)+I(\xi + \cos\theta)+I(\xi - \cos\theta)-4I(\xi)]\nonumber\\
&+\frac{\beta S_0+\mu_2}{2}I(\xi)-\mu_2I(\xi).
\end{align}
For \(\xi < \xi^*\), using inequality (\ref{5.1}) we get
\begin{align}\label{5.2}
cI'(\xi)&\geq d_2[I(\xi + \sin\theta)+I(\xi - \sin\theta)+I(\xi + \cos\theta)+I(\xi - \cos\theta)-4I(\xi)]+\frac{\beta S_0-\mu_2}{2}I(\xi).
\end{align}
Denote \(\displaystyle\vartheta=\frac{\beta S_0-\mu_2}{2}\) and \(E(\xi)=\displaystyle\int_{-\infty}^{\xi}I(y)dy\) for \(\xi\in\mathbb{R}\), note that \(\vartheta > 0\) since \(\Re_0 > 1\). Integrating inequality (\ref{5.1}) from \(-\infty\) to \(\xi\) and using \(I(-\infty)=0\), we  get
\begin{align}
cI(\xi)&\geq d_2[E(\xi + \sin\theta)+E(\xi - \sin\theta)+E(\xi + \cos\theta)+E(\xi - \cos\theta)-4E(\xi)]+\vartheta E(\xi)  ~~~\text{ for } \xi < \xi^*.
\end{align}
Next, integrating inequality (\ref{5.1}) from \(-\infty\) to \(\xi\), then for any $\xi < \xi^*$, one has that
\begin{align}\label{5.4}
cE(\xi)&\geq d_2\left(\int_{\xi}^{\xi + \sin\theta}E(\tau)d\tau-\int_{\xi - \sin\theta}^{\xi}E(\tau)d\tau+\int_{\xi}^{\xi + \cos\theta}E(\tau)d\tau-\int_{\xi - \cos\theta}^{\xi}E(\tau)d\tau\right)+\vartheta\int_{-\infty}^{\xi}E(\tau)d\tau.
\end{align}
Since \(E(\xi)\) is strictly increasing in \(\mathbb{R}\) and \(c\leq0\), we can deduce the conclusion:
\[
0\geq cE(\xi)\geq d_2\left(\int_{\xi}^{\xi + \sin\theta}E(\tau)d\tau-\int_{\xi - \sin\theta}^{\xi}E(\tau)d\tau+\int_{\xi}^{\xi + \cos\theta}E(\tau)d\tau-\int_{\xi - \cos\theta}^{\xi}E(\tau)d\tau\right)+\vartheta\int_{-\infty}^{\xi}E(\tau)d\tau >0.
\]
\end{proof}
The above equation is a contradiction . Hence, \(c > 0\). This proof is complete. \(\square\)

Next, we will use two - sided Laplace method to prove the nonexistence of traveling wave solutions.
\begin{theorem}
Assume that \(\Re_0 > 1\) and \(c < c^*\). Then, there is no nontrivial solution \((S(\xi), I(\xi))\) of system (\ref{Model}) satisfying the asymptotic boundary conditions (\ref{bj1}) and (\ref{bj2}).
\end{theorem}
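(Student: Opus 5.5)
We argue by contradiction. Suppose $\Re_0>1$, $c<c^*$, and $(S,I)$ is a nontrivial solution of (\ref{WaveEqu}) satisfying (\ref{bj1})--(\ref{bj2}). By Lemma \ref{lemma5.1} we may assume $0<c<c^*$. Since $S(\xi)\to S_0$ as $\xi\to-\infty$, fix $\epsilon>0$ with $\beta(S_0-\epsilon)-\mu_2>0$ and choose $\xi_1$ such that
\[
\frac{\beta S(\xi)}{1+\alpha I(\xi)}\ge \beta S_0-\epsilon \quad\text{for } \xi<\xi_1 .
\]
For $\xi<\xi_1$ the $I$-equation then gives the linear differential inequality
\[
cI'(\xi)\ge d_2\mathfrak{J}[I](\xi)+\big(\beta S_0-\epsilon-\mu_2\big)I(\xi).
\]
The plan is a two-sided Laplace transform argument in the spirit of Carr--Chmaj. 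We (a) show that $I$ decays exponentially at $-\infty$; (b) show that its Laplace transform $\int_{\mathbb R}e^{-\lambda\xi}I(\xi)\,\d\xi$ converges for all $\lambda>0$; and (c) contradict (b) using $\Delta_c(\lambda)>0$ from Lemma \ref{WaveSpeed}(ii).

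\textbf{Step 1: exponential decay at $-\infty$.} Let $E(\xi)=\int_{-\infty}^{\xi}I(y)\,\d y$. First, $E$ is finite: integrate the $I$-equation, and use Lemma \ref{10}, which bounds $I(\xi\pm\sin\theta)/I(\xi)$ and $I(\xi\pm\cos\theta)/I(\xi)$. Integrating the inequality from $-\infty$ to $\xi$ and using $I(-\infty)=0$ gives
\[
cI(\xi)\ge d_2\mathfrak{J}[E](\xi)+\vartheta E(\xi),\qquad \vartheta>0 .
\]
Integrating once more gives $cE(\xi)\ge d_2\,(\text{difference terms})+\vartheta\int_{-\infty}^{\xi}E$. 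Because $E$ is increasing, each difference term $\int_{\xi}^{\xi+h}E-\int_{\xi-h}^{\xi}E$ is nonnegative. Hence $\vartheta\int_{-\infty}^{\xi}E\le cE(\xi)$, and monotonicity turns this into $\vartheta\,\eta\,E(\xi-\eta)\le cE(\xi)$ for any $\eta>0$. Choosing $\eta$ with $\vartheta\eta/c>1$ makes $E(\xi-\eta)\le \kappa E(\xi)$ with $\kappa<1$, so $E(\xi)\le Ce^{\nu_0\xi}$ for some $\nu_0>0$. By Lemma \ref{10} ($I'/I$ bounded, together with the shift-ratio bounds), $I$ inherits the bound $I(\xi)\le C'e^{\nu_0\xi}$ for $\xi$ near $-\infty$.

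\textbf{Step 2: convergence of the transform on $(0,\infty)$.} Since $I$ is bounded (Step 3 of the existence proof), $\mathcal{L}(\lambda)=\int_{\mathbb R}e^{-\lambda\xi}I(\xi)\,\d\xi$ converges for $0<\lambda<\nu_0$. Multiply the full $I$-equation by $e^{-\lambda\xi}$ and integrate over $\mathbb R$; shifts contribute the factors $e^{\pm\lambda\sin\theta}$ and $e^{\pm\lambda\cos\theta}$. Writing the nonlinearity as $\beta S_0I$ minus the remainder $\beta S_0I-\beta SI/(1+\alpha I)$, which is $o(I)$ at $-\infty$ and bounded at $+\infty$, we get
\[
\Delta_c(\lambda)\,\mathcal{L}(\lambda)=\int_{\mathbb R}e^{-\lambda\xi}\Big(\beta S_0-\frac{\beta S(\xi)}{1+\alpha I(\xi)}\Big)I(\xi)\,\d\xi .
\]
Here $\Delta_c$ is evaluated at $\lambda$, with the sign conventions matching (\ref{CharEqu}) because $\mathfrak{J}$ is symmetric in $\pm$. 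Let $\lambda_0=\sup\{\lambda:\mathcal{L}(\lambda)<\infty\}$ and suppose $\lambda_0<\infty$. By Widder's theorem, $\mathcal{L}$, having a nonnegative integrand, is singular at $\lambda_0$. The remainder vanishes at $-\infty$ because $S\to S_0$ and $I\to0$, and $S\to S_0$ at least exponentially, which follows from the $S$-equation together with the decay of $I$. So the right side is $\int e^{-\lambda\xi}\,O(e^{\nu_1\xi})I$ near $-\infty$ and extends analytically to $\lambda<\lambda_0+\nu_1$. Since $\Delta_c>0$ on $(0,\infty)$, $\mathcal{L}=\mathrm{RHS}/\Delta_c$ then also extends analytically past $\lambda_0$, contradicting Widder. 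Therefore $\mathcal{L}(\lambda)<\infty$ for all $\lambda>0$.

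\textbf{Step 3: contradiction.} As $\lambda\to\infty$, $\Delta_c(\lambda)$ grows like $d_2e^{\lambda\max(|\sin\theta|,|\cos\theta|)}$. Dividing the identity by $\mathcal{L}(\lambda)>0$ gives
\[
\Delta_c(\lambda)=\frac{\int e^{-\lambda\xi}\big(\beta S_0-\beta S/(1+\alpha I)\big)I\,\d\xi}{\mathcal{L}(\lambda)}\le \beta S_0,
\]
using $0<S<S_0$ (Step 2 of the existence proof). This contradicts $\Delta_c(\lambda)\to+\infty$, so no such wave exists for $c<c^*$.

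The main obstacle is Step 1, specifically passing from the integrated inequality to genuine exponential decay of $E$ and then of $I$ with only one-sided control. In the 2-D lattice the shifts $\pm\sin\theta$ and $\pm\cos\theta$ have unequal magnitudes, and one of them may vanish, so the signs of the difference terms must be checked case by case in $\theta$, as in Lemma \ref{10}. A secondary difficulty is the exponential rate of $S\to S_0$ needed for the analytic extension in Step 2. I would get it by integrating the $S$-equation with a second Laplace transform, or else bypass it: bound the remainder by $\epsilon I$ near $-\infty$, which gives $(\Delta_c(\lambda)-\epsilon)\mathcal{L}(\lambda)\le C$, and conclude as in Step 3.
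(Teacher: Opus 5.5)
Your route is the paper's: the $E$-integration trick giving $E(\xi-\eta)\le\kappa E(\xi)$ and hence exponential decay of $I$ at $-\infty$, the two-sided Laplace identity $\Delta_c(\lambda)\mathcal{L}(\lambda)=\int_{\mathbb{R}}\mathrm{e}^{-\lambda\xi}r(\xi)\,\d\xi$ with $r=\bigl(\beta S_0-\beta S/(1+\alpha I)\bigr)I$, Widder's theorem at the abscissa $\lambda_0$, and the final contradiction, which you make explicit via $\Delta_c(\lambda)\le\beta S_0$. On the remainder you are more careful than the paper. The paper continues the right-hand side up to $2\mu_0$ by bounding $r$ by $\alpha\beta S_0[I^-(\xi)]^2$. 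That bound drops the term $\beta(S_0-S)I/(1+\alpha I)$, and it uses the lower solution $I^-$, which has nothing to do with an arbitrary wave. You correctly see that what is needed is $S_0-S=O(\mathrm{e}^{\nu_1\xi})$ as $\xi\to-\infty$.

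\textbf{The gap.} That step is left unproved, and neither fallback you offer works. The bypass gives $(\Delta_c(\lambda)-\epsilon)\mathcal{L}(\lambda)\le C$ only for $\lambda<\lambda_0$, where the identity is already available. At best it yields $\mathcal{L}(\lambda_0)<\infty$. That is compatible with $\lambda_0$ being a singular point (think of $\int_0^\infty\mathrm{e}^{-st}(1+t^2)^{-1}\,\d t$ at $s=0$), and it says nothing for $\lambda>\lambda_0$, whereas your Step 3 needs $\mathcal{L}(\lambda)<\infty$ for every $\lambda>0$. A Laplace transform of $S_0-S$ cannot be started either, since its convergence at $-\infty$ presupposes the decay being sought.

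\textbf{The fix: a comparison argument.}
\begin{itemize}
\item First, $S\le S_0=\Lambda/\mu_1$. If $\sup S>S_0$, the limits force a maximum point $\xi_0$, and the $S$-equation gives $0\le\Lambda-\mu_1S(\xi_0)<0$.
\item Then $u=S_0-S\ge0$ is bounded and satisfies $cu'-d_1\mathfrak{J}[u]+\mu_1u=f$, with $0\le f=\beta SI/(1+\alpha I)\le C_1\mathrm{e}^{\nu_0\xi}$ for $\xi<\xi^*\le0$.
\item Take $\nu\in(0,\nu_0]$ small enough that $c\nu-d_1(\mathrm{e}^{\nu\sin\theta}+\mathrm{e}^{-\nu\sin\theta}+\mathrm{e}^{\nu\cos\theta}+\mathrm{e}^{-\nu\cos\theta}-4)+\mu_1\ge\mu_1/2$. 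Take $B$ large enough that $B\mu_1\ge2C_1$ and $B\mathrm{e}^{\nu(\xi^*-1)}\ge\sup u$.
\item Then $z=u-B\mathrm{e}^{\nu\xi}$ satisfies $cz'-d_1\mathfrak{J}[z]+\mu_1z\le0$ on $(-\infty,\xi^*)$, $z\le0$ on $[\xi^*-1,\infty)$, and $z(-\infty)=0$.
\item A positive global maximum at some $\xi_1<\xi^*-1$ would have $z'(\xi_1)=0$ and $\mathfrak{J}[z](\xi_1)\le0$, forcing $\mu_1z(\xi_1)\le0$, a contradiction.
\end{itemize}
Hence $0\le S_0-S\le B\mathrm{e}^{\nu\xi}$, so $r=O(\mathrm{e}^{\nu\xi})I$ near $-\infty$, and your continuation to $\lambda<\lambda_0+\nu$ goes through.

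\textbf{Smaller points.} Lemma \ref{10} and Steps 2--3 of the existence proof concern the constructed wave. Either note that their proofs use only the equations, or avoid them:
\begin{itemize}
\item finiteness of $E$ follows by integrating the lower inequality over $[y,\xi]$ and letting $y\to-\infty$;
\item decay of $I$ follows from that of $E$ through the integrated upper inequality $cI\le d_2\mathfrak{J}[E]+(\beta S_0-\mu_2)E$, as in the paper.
\end{itemize}
Also, no case analysis in $\theta$ is needed: $\int_\xi^{\xi+h}E-\int_{\xi-h}^{\xi}E$ is even in $h$ and nonnegative for nondecreasing $E$.
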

\begin{proof}
By finding contradictory methods, assume that system (\ref{Model}) has a nontrivial positive solution \((S(\xi), I(\xi))\) that satisfies asymptotic boundary conditions (\ref{bj1}) and (\ref{bj2}). Next, \(c > 0\) By Lemma \ref{lemma5.1} and
\[
S(\xi)\to S_0 \text{ and } I(\xi)\to0 \text{ as } \xi\to-\infty.
\]
Let \(\vartheta = \displaystyle\frac{\beta S_0-\mu_2}{2}\) and \(E(\xi)=\int_{-\infty}^{\xi}I(y)dy\) for \(\xi\in\mathbb{R}\). From the proof of lemma \ref{lemma5.1}, it can be concluded that there exists a \(\xi^* < 0\), such that
\[
\begin{aligned}
cE(\xi)&\geq d_2\left(\int_{\xi}^{\xi + \sin\theta}E(\tau)d\tau-\int_{\xi - \sin\theta}^{\xi}E(\tau)d\tau+\int_{\xi}^{\xi + \cos\theta}E(\tau)d\tau-\int_{\xi - \cos\theta}^{\xi}E(\tau)d\tau\right)+\vartheta\int_{-\infty}^{\xi}E(\tau)d\tau ~~\text{ for } \xi < \xi^*.
\end{aligned}
\]
Due to the strict increment of \(E(\xi)\) in \(\mathbb{R}\) , we can get
\[
d_2\left(\int_{\xi}^{\xi + \sin\theta}E(\tau)d\tau-\int_{\xi - \sin\theta}^{\xi}E(\tau)d\tau+\int_{\xi}^{\xi + \cos\theta}E(\tau)d\tau-\int_{\xi - \cos\theta}^{\xi}E(\tau)d\tau\right)>0.
\]
Thus,
\begin{align}\label{5.5}
cE(\xi)&\geq\vartheta\int_{-\infty}^{\xi}E(\tau)d\tau  \text{ for } \xi < \xi^*.
\end{align}
Therefore, there exists some constant \(\eta > 0\) such that
\begin{align}
\vartheta\eta E(\xi - \eta)&\leq cE(\eta) \text{ for } \xi < \xi^*.
\label{eq:4.6}
\end{align}
Furthermore, there exists a sufficiently large value of \(h > 0\) and \(\epsilon_0\in(0, 1)\) such that
\begin{align}
E(\xi - h)&\leq\epsilon_0E(\xi) \text{ for } \xi < \xi^*.
\end{align}
Set
\[
\mu_0:=\frac{1}{h}\ln\frac{1}{\epsilon_0} \text{ and } F(\xi):=E(\xi)e^{-\mu_0\xi}.
\]
We have
\[
F(\xi - h)=E(\xi - h)e^{-\mu_0(\xi - h)}<\epsilon_0e(\xi)e^{-\mu_0(\xi - h)} = F(\xi) \text{ for } \xi < \xi^*,
\]
This means that \(F(\xi)\) is bounded when \(\xi\to-\infty\).
Since \(\int_{-\infty}^{\infty}I(\xi)d\xi < \infty\), we obtain that
\[
\lim_{\xi\to\infty}F(\xi)=\lim_{\xi\to\infty}E(\xi)e^{-\mu_0\xi}=0.
\]

According to  the second equation of (\ref{WaveEqu}), we get
\[
cI'(\xi)\leq d_2[I(\xi + \sin\theta)+I(\xi - \sin\theta)+I(\xi + \cos\theta)+I(\xi - \cos\theta)-4I(\xi)]+\beta S_0I(\xi)-\mu_2I(\xi),
\]
integrating over \((-\infty, \xi)\) gives us
\[
cI(\xi)\leq d_2[E(\xi + \sin\theta)+E(\xi - \sin\theta)+E(\xi + \cos\theta)+E(\xi - \cos\theta)-4E(\xi)]+\beta S_0E(\xi)-\mu_2E(\xi).
\]
Thus, we can deduce that
\[
\sup_{\xi\in\mathbb{R}}\{I(\xi)e^{-\mu_0\xi}\}<+\infty \text{ and } \sup_{\xi\in\mathbb{R}}\{I'(\xi)e^{-\mu_0\xi}\}<+\infty.
\]
Define the following two-sided Laplace transform for \(I(\cdot)\), where \(\lambda\in\mathbb{C}\) and \(0 < \text{Re}\lambda < \mu_0\)
\[
\mathcal{L}(\lambda):=\int_{-\infty}^{\infty}e^{-\lambda\xi}I(\xi)d\xi.
\]
Note that
\begin{align*}
&\int_{-\infty}^{\infty}e^{-\lambda\xi}[I(\xi + \sin\theta)+I(\xi - \sin\theta)+I(\xi + \cos\theta)+I(\xi - \cos\theta)]d\xi\\
&=e^{\sin\theta}\int_{-\infty}^{\infty}e^{-\lambda(\xi + \sin\theta)}I(\xi + \sin\theta)d\xi
+e^{-\sin\theta}\int_{-\infty}^{\infty}e^{-\lambda(\xi - \sin\theta)}I(\xi - \sin\theta)d\xi\\
&+e^{\cos\theta}\int_{-\infty}^{\infty}e^{-\lambda(\xi + \cos\theta)}I(\xi + \cos\theta)d\xi
+e^{-\cos\theta}\int_{-\infty}^{\infty}e^{-\lambda(\xi - \cos\theta)}I(\xi - \cos\theta)d\xi\\
&=(e^{\sin\theta}+e^{-\sin\theta})\mathcal{L}(\lambda)+(e^{\cos\theta}+e^{-\cos\theta})\mathcal{L}(\lambda)
\end{align*}
and
\[
\int_{-\infty}^{\infty}e^{-\lambda\xi}I'(\xi)d\xi=\left.e^{\lambda}I(\xi)\right|_{-\infty}^{\infty}-\int_{-\infty}^{\infty}I(\xi)de^{-\lambda\xi}=\lambda\mathcal{L}(\lambda).
\]
From the second equation of (\ref{WaveEqu}), we can obtain
\begin{align}
d_2\mathfrak{J}[I](\xi)+\beta S_0I(\xi)-\mu_2I(\xi)-cI'(\xi)&=\beta S_0I(\xi)-\frac{\beta S(\xi)I(\xi)}{1 + \alpha I(\xi)}.
\label{eq:5.8}
\end{align}
By performing two-sided Laplace transform on formula (\ref{eq:5.8}), we can obtain
\begin{align}\label{eq:5.9}
\Delta(\lambda, c)\mathcal{L}(\lambda)&=\int_{-\infty}^{\infty}e^{-\lambda\xi}[\beta S_0I(\xi)-\frac{\beta S(\xi)I(\xi)}{1 + \alpha I(\xi)}]d\xi.
\end{align}
From the proof in Lemma \ref{lemma5.1}, it can be concluded that when \(\xi\to-\infty\), we have
\begin{align*}
[\beta S_0I(\xi)-\frac{\beta S(\xi)I(\xi)}{1 + \alpha I(\xi)}]e^{-2\mu_0\xi}\leq \alpha\beta S_0 [ I^-(\xi)]^2e^{-2\mu_0\xi}
&\leq\alpha\beta S_0\left(\sup_{\xi\in\mathbb{R}}\{I(\xi)e^{-\mu_0\xi}\}\right)^2\\
&\leq+\infty.
\end{align*}
Therefore, we can conclude that
\begin{align}\label{eq:5.10}
\sup_{\xi\in\mathbb{R}}[\beta S_0I(\xi)-\frac{\beta S(\xi)I(\xi)}{1 + \alpha I(\xi)}]e^{-2\mu_0\xi}&<+\infty.
\end{align}

By the property of Laplace transform (Widder 1941), either there exists a real number \(\mu_0\) such that \(\mathcal{L}(\lambda)\) is analytic for \(\lambda\in\mathbb{C}\) with \(0 < \text{Re}\lambda < \mu_0\) and \(\lambda = \mu_0\) is singular point of \(\mathcal{L}(\lambda)\), or \(\mathcal{L}(\lambda)\) is well defined for \(\lambda\in\mathbb{C}\) with \(\text{Re}\lambda > 0\). Furthermore, the two Laplace integrals can be analytically continued to the whole right half line; otherwise, the integral on the left of (\ref{eq:5.9}) has singularity at \(\lambda = \mu_0\) and it is analytic for all \(\lambda < \mu_0\). However, it follows from (\ref{eq:5.10}) that the integral on the right of (\ref{eq:5.9}) is actually analytic for all \(\lambda\leq2\mu_0\), a contradiction. Thus, (\ref{eq:5.9}) holds for all \(\text{Re}\lambda > 0\). From Lemma 1, \(\Delta(\lambda, c)>0\) for all \(\lambda > 0\) and by the definition of \(\Delta(\lambda, c)\) in (\ref{CharEqu}), we know that \(\Delta(\lambda, c)\to\infty\) as \(\lambda\to\infty\), which is a contradiction with Eq (\ref{eq:5.9}). This ends the proof. \(\square\)
\end{proof}

\section*{Acknowledgments}
This study was supported by the National Natural Science Foundation of China (nos. 12371490, 12401638), the
Natural Science Foundation of Heilongjiang Province, PR China (nos. YQ2024A011, JQ2023A005), the Outstanding
Youth Fund of Heilongjiang University, PR China (no. JCL202203)..
\bibliographystyle{elsarticle-num} 
\bibliography{MyBib}



\end{document}